\documentclass[a4paper,reqno]{amsart}

\usepackage{a4wide}

\usepackage[english]{babel}

\usepackage{amsmath}
\usepackage{amssymb}
\usepackage{amsthm}
\usepackage[foot]{amsaddr}
\usepackage{cite}
\usepackage{centernot}
\usepackage{mathtools}
\usepackage{bm}\newcommand{\mat}[1]{\bm{\mathsf{#1}}}

\usepackage{enumerate}
\usepackage{units}
\usepackage{graphicx,color}
\usepackage{subfigure}
\usepackage{setspace}
\usepackage{wrapfig}
\usepackage{cutwin}
\usepackage[export]{adjustbox}
\usepackage{url}
\usepackage{nicefrac}
\usepackage{xcolor}

\numberwithin{equation}{section}


\newcommand{\R}{\mathbb{R}}

\newcommand{\N}{\mathbb{N}}

\newcommand{\grad}{\nabla}

\newcommand{\hoz}{\spcH^1_0(\Omega)}
\newcommand{\ho}{\spcH^1(\Omega)}
\newcommand{\hod}{\spcH^1_{\set{D}}(\Omega)}
\newcommand{\into}{\int_{\Omega}}
\newcommand{\wn}{\mathbb{W}_N}
\newcommand{\en}{e_{U}}
\newcommand{\htto}{\spcH^{2}_{\mat\beta}(\Omega)}
\newcommand{\htt}{\spcH^{2}_{\mat\beta}}
\newcommand{\T}{\mathcal{T}_h}
\newcommand{\Pb}{\Phi_{\bm{\mathsf{\beta}}}}

\newcommand{\spcL}{\mathrm{L}}
\newcommand{\spcH}{\mathrm{H}}
\newcommand{\Ltwo}{\spcL^2(\Omega)}
\newcommand{\set}[1]{\mathcal{#1}}
\newcommand{\x}{\mat{x}}
\newcommand{\dd}{\mathsf{d}}
\newcommand{\dx}{\dd\x}

\newcommand{\GD}{\Gamma_{\set{D}}}
\newcommand{\GN}{\Gamma_{\set{N}}}
\newcommand{\jN}{u_{\set{N}}}
\newcommand{\CT}{C_{\ref{eq:Trudinger}}}
\newcommand{\CS}{C_{\ref{eq:Sobolev}}}

\newcommand{\opT}{\mathsf{T}_\alpha}
\newcommand{\opTN}{\mathsf{T}_{N,\alpha}}
\newcommand{\CK}{C_{\ref{eq: quasi-optimality}}}

\newtheorem{theorem}{Theorem}[section]

\newtheorem{lemma}[theorem]{Lemma}
\newtheorem{proposition}[theorem]{Proposition}

\theoremstyle{definition}
\newtheorem{definition}[theorem]{Definition}

\newtheorem{remark}{Remark}

\begin{document}

\author{Florian Spicher \and Thomas P.~Wihler}
\address{Mathematics Institute, University of Bern, CH-3012 Switzerland}

\title[Optimal FEM for semilinear PDE with subcritical reactions]{Optimal finite element approximations of monotone semilinear elliptic PDE with subcritical nonlinearities 
}

\keywords{Semilinear elliptic boundary value problems, monotone operators, subcritical growth, corner-weighted Sobolev spaces, elliptic corner singularities, finite element methods, optimal convergence, graded meshes in polygons, Trudinger inequality}

\subjclass{47J25, 65J15, 65N30}

\thanks{The authors acknowledge the financial support of the Swiss National Science Foundation (SNSF), Grant No.~$200021\underline{\phantom{a}}212868$.}

\begin{abstract}
We study iterative finite element approximations for the numerical approximation of semilinear elliptic boundary value problems with monotone nonlinear reactions of subcritical growth. The focus of our contribution is on an optimal a priori error estimate for a contractive Picard type iteration scheme on meshes that are locally refined towards possible corner singularities in polygonal domains. Our analysis involves, in particular, an elliptic regularity result in weighted Sobolev spaces and the use of the Trudinger inequality, which is instrumental in dealing with subcritically growing nonlinearities. A series of numerical experiments confirm the accuracy and efficiency of our method.
\end{abstract}

\maketitle

\section{Introduction}
    On a bounded, open, non-degenerate polygonal domain $\Omega \subset\R^2$ with a finite number of straight edges $\Gamma_1,\ldots,\Gamma_m$, we consider the semilinear boundary value problem
      \begin{alignat}{2}\label{Pb:main PDE}
            -\Delta u + g(\cdot,u) &= f&\qquad&  \text{in } \Omega\nonumber \\
            u&=0&& \text{on } \GD \\
            \partial_{\mat n} u&=0&& \text{on } \GN.\nonumber
    \end{alignat}
Here, $f\in\spcL^p(\Omega)$, for $p\in(1,\infty)$, is a given source function, and $g:\,\Omega\times\R\to\R$ represents a possibly nonlinear reaction term that is measurable with respect to its first argument, and continuously differentiable with respect to its second one, i.e., it is a Carathéodory function; we suppose that the partial derivative in the second variable, denoted by $g_u\equiv \partial_u g$, features \emph{subcritical growth}, i.e.,  for any $\sigma>0$, it holds
\begin{equation}\label{eq:gsub}
	\lim\limits_{|\xi|\rightarrow \infty}\frac{|g_u(\mat x,\xi)|}{\exp(\sigma \xi^2)}=0\qquad\text{(uniformly in $\bm{\mathsf{x}}\in\Omega$);}
\end{equation}
we also assume that $g$ is \emph{monotone} in the sense that
\begin{subequations}\label{g monotonicity}
\begin{align}
(g(\x,t_1)-g(\x,t_2))(t_1-t_2) &\ge 0\qquad \forall t_1,t_2 \in \R\quad\forall \x\in\Omega, \label{eq:gmono1}
\intertext{and that}
g(\x,t)t&\ge 0\qquad \forall t\in\R\quad\forall \x\in\Omega. \label{eq:gmono2}
\end{align}
\end{subequations}
Moreover, in order to specify the boundary data, we consider the subsets $ \GD= \bigcup_{j\in \set{D}} \Gamma_j$ and $ \GN= \bigcup_{j\in \set{N}} \Gamma_j$, which refer to Dirichlet and Neumann boundary conditions, respectively, where $\set{D}\neq\emptyset$ and $\set{N}$ are two disjunct sets with $\set{D}\cup\set{N}=\{1,\ldots,m\}$.

The focus of this paper is on \emph{optimally convergent} iterative finite element approximations of the weak formulation of~\eqref{Pb:main PDE}, which consists in finding a weak solution $u\in\hod$ such that
\begin{equation}\label{weak formulation}
        a(u,v)+b(u;v)=\into fv \;\dx\qquad \forall v \in \hod,
\end{equation}
where
\begin{equation}\label{eq:a}
      a(u,v):= \into \grad u \cdot\grad v\;\dx, \qquad u,v \in \hod,
\end{equation}
is the standard symmetric bilinear form for the Laplacian, and, for any given $w\in\hod$, the form $b(w;\cdot)$ is given by
    \begin{equation}\label{eq:b}
        b(w;v):=\into g(\x,w)v\;\dx, \qquad v \in \hod.
    \end{equation}
Throughout, we denote by $\hod$ the Sobolev space of all functions in $\spcH^1(\Omega)$ with vanishing boundary trace along $\Gamma_{\set{D}}$, equipped with the standard norm $\|\nabla(\cdot)\|_{\Ltwo}$.

The \emph{iterative} solution of monotone elliptic problems traces back to the early work of Zarantonello~\cite{Zarantonello:1960}, where the convergence of a contractive fixed point iteration scheme for strongly monotone and (globally) Lipschitz continuous operators in Hilbert spaces has been established. In recent years, this approach has gained renewed interest for the purpose of numerical solution methods for monotone elliptic boundary value problems. More specifically, the key idea is to discretize the Zarantonello scheme (more generally referred to as Picard iteration) on suitably chosen discrete Galerkin spaces (e.g. by applying finite elements), and thereby to deal with possible nonlinearities by naturally linking the iterative discretization to the underlying infinite-dimensional problem. This approach gives rise to the so-called \emph{iterative linearized Galerkin (ILG)} methodology, which has been introduced as a general abstract framework in~\cite{Heid2018}; see also \cite{CongreveWihler:2017}. Furthermore, in combination with \emph{adaptive} finite element discretizations for more specific nonlinear elliptic problems, the application of iterative linearization schemes has been developed, e.g., in~\cite{AmreinWihler:17,El-AlaouiErnVohralik:11,ErnVohralik:13}. In addition, in the articles~\cite{ghps18,ghps21,HeidPraetoriusWihler:2020}, the convergence and cost optimality of adaptive ILG discretizations for strongly monotone problems has been studied.

The fixed point technique applied in Zarantonello's original work~\cite{Zarantonello:1960} can be extended to monotone problems that are merely \emph{locally} Lipschitz continuous. This observation has been pursued in the papers~\cite{BDMS15,BDMR17,He2024} for monotone semilinear elliptic problems with monomial reactions, or involving more general nonlinearities of \emph{algebraic growth} in the works~\cite{BeckerBrunnerInnerbergerMelenkPraetorius:23,BPS:25}. Among the favorable properties of the Zarantonello iteration, which will be applied as a nonlinear solver in this paper, we point, on the one hand, to its ability to yield global convergence under mere monotonicity and subcritical growth assumptions, and, on the other hand, to the fact that the same symmetric stiffness matrix can be reused in each iteration step, thus avoiding expensive assemblies and extra regularity requirements from other nonlinear solvers; see \cite{He2024} for details. Specifically, the novelty of the present paper is twofold:
\begin{enumerate}[1.]

\item Our analysis reaches far beyond the previously mentioned works, where algebraically growing nonlinearities have been considered, and enables nonlinear reaction terms of exponential and even of subcritical growth; cf.~\eqref{eq:gsub}. Specifically, for a suitable parameter $\alpha>0$, we will prove that the Picard iteration scheme (here written in strong form) given by
\begin{equation}\label{eq:Picardstrong}
    -\Delta u_{n+1}=-\Delta u_n+\alpha \left(\Delta u_n-g(\cdot,u_n)+f\right),\qquad n\ge 0,
\end{equation}
converges to the (unique) solution of~\eqref{Pb:main PDE} as $n\to\infty$; see Thm.~\ref{thm:existence}. For this purpose, we derive a local Lipschitz bound (Lem.~\ref{lem:bbounds}) whose proof crucially hinges on the application of the \emph{Trudinger inequality}~\cite{Trudinger1967}. Our result immediately applies to discrete subspaces of $\hod$, and thus, in particular, to any conforming finite element discretization of the iteration~\eqref{eq:Picardstrong}, thereby yielding a convergent ILG method for~\eqref{Pb:main PDE} that does not require to (directly) solve any nonlinear algebraic system; see~\S\ref{sc:ILG}.

\item In addition, we develop a regularity result for~\eqref{Pb:main PDE} in a Kondratiev-type setting~\cite{kondrat1967boundary}, which takes into account possible corner singularities in the solution. More precisely, we make use of the regularity theory for linear elliptic problems developed in~\cite{Babuska1986,Babuska1979} in order to show that the solution of \eqref{Pb:main PDE} belongs to a family of corner-weighted Sobolev spaces (featuring classical $\spcH^2$-regularity in the interior of the domain $\Omega$). This, in turn, motivates the application of so-called graded mesh refinements towards the corners of $\Omega$, introduced in~\cite{Babuska1979} (see also~\cite[\S4.3]{Schwab1998}), which allow to resolve the occurring singularities at an optimal rate. Our main result, Thm.~\ref{thm:main1}, states that the proposed ILG scheme for~\eqref{Pb:main PDE} based on $\mathbb{P}_1$-FE spaces with appropriate local mesh grading converges optimally and for finitely many iterations on each discrete space.

\end{enumerate}

\subsubsection*{Outline} 
We begin by recalling the classical Trudinger inequality in \S\ref{sc:sg}, and by deriving a suitable modification that serves our purposes (Prop.~\ref{prop:Tricked T}); in particular, this allows to show the convergence of the iteration~\eqref{eq:Picardstrong} in~\S\ref{sc:ws}, and thereby to provide a constructive proof for the existence of a (unique) solution of~\eqref{Pb:main PDE} that constitutes the basis of our ILG analysis. Furthermore, we derive a regularity result in weighted Sobolev spaces in \S\ref{sc:reg}. In \S\ref{sc:optimal} we turn our attention to finite element discretizations of both the nonlinear model problem~\eqref{Pb:main PDE} and the iterative Picard method~\eqref{eq:Picardstrong} on graded meshes towards the corners of~$\Omega$ in \S\ref{sc:nonlinear}--\ref{sc:fes} and \S\ref{sc:ILG}, respectively.
Finally, we test our theoretical findings through a series of numerical experiments in \S\ref{sc:numerics}, which
highlight the practical performance of our ILG method,
and draw some conclusions in~\S\ref{sc:concl}.


\section{Weak Solution and Regularity}

The existence of a weak solution of~\eqref{Pb:main PDE} under the subcritical growth condition~\eqref{eq:gsub} will be established based on the theory of monotone operators, see, e.g., \cite{zeidler1990nonlinear}. In order to control the nonlinearity, we make use of the so-called Trudinger inequality.

\subsection{Subcritical growth and the Trudinger inequality}
\label{sc:sg}

For any function $\varphi:\,\Omega\times\R\to\R$ that is continuous in the second argument and has subcritical growth, i.e.,
\begin{equation}\label{eq:SCG}
\varphi(\cdot,\xi)=o(\exp(\sigma \xi^2))\qquad\text{for any $\sigma>0$ (and uniformly in $\x\in\Omega$)},
\end{equation}
we recall 
the classical Trudinger inequality (in the two-dimensional case) states that there are constants $\mu_0>0$ and $\CT>0$ (depending on $\Omega)$ such that
\begin{equation}\label{eq:Trudinger}
\sup_{\substack{v \in \spcH^1(\Omega) \\ \|v\|_{\spcH^1(\Omega)}\leq 1}} \into \exp(\mu_0 v^2) \; \dx \leq \CT;
\end{equation}
see Trudinger's original paper~\cite[Thm.~2]{Trudinger1967}.
We note that the restriction on the $\spcH^1$-norm in the supremum above is usually not satisfied for weak solutions of~\eqref{Pb:main PDE}, however, exploiting the subcritical growth property permits to circumvent this issue. To this end, for any $\varphi$ as above, we notice that the expression
\begin{equation}\label{eq:theta}
\vartheta(\varphi,\sigma):=\sup_{(\x,\xi)\in\Omega\times\R}\frac{|\varphi(\x,\xi)|}{\exp(\sigma\xi^2)}
\end{equation}
is finite for any $\sigma>0$.
Then, we establish the following result, for which we first define
 \begin{equation}\label{eq:mu}
        \mu(r):=\mu_0\min\left\{1,r^{-2}\right\},\qquad r>0,
        \end{equation}
        with $\mu_0>0$ the constant from the Trudinger inequality~\eqref{eq:Trudinger}.

    \begin{proposition}[Subcritical growth estimates]\label{prop:Tricked T}
    Consider $u\in \ho$ and $\rho>0$ such that $\|\nabla u\|_{\Ltwo}\le \rho$. If $0\le\sigma\le\mu(\rho)$, then we have the bound
        \begin{equation}\label{eq:trickT}
            \into \exp(\sigma u^2) \;\dx \leq \CT.
        \end{equation}
        Furthermore, for a function $\varphi:\,\Omega\times\R\to\R$ that has subcritical growth as in~\eqref{eq:SCG}, and any $p\in[1,\infty)$, it holds that
        \begin{equation}\label{eq:gLp}
        \|\varphi(\cdot,u)\|_{\spcL^p(\Omega)}
         \leq \CT^{\nicefrac{1}{p}}\vartheta(\varphi,\nicefrac{\mu(\rho)}{p}),
        \end{equation}
and, in particular, $\varphi(\cdot,u) \in \spcL^p(\Omega)$.
    \end{proposition}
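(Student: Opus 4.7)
\medskip

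\noindent\textbf{Plan of proof.} The statement decouples neatly into a scaling argument that reduces to the Trudinger inequality~\eqref{eq:Trudinger}, followed by a pointwise bound on $\varphi(\cdot,u)$ exploiting the definition~\eqref{eq:theta} of $\vartheta$. I would carry out~\eqref{eq:trickT} first and then deduce~\eqref{eq:gLp} as a direct consequence.

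\medskip

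\noindent\emph{Step 1: Scaling to unit ball and Trudinger.} For the bound~\eqref{eq:trickT}, the idea is to rescale $u$ so that the normalized function satisfies the hypothesis of~\eqref{eq:Trudinger}. Setting $c:=\max\{1,\rho\}$ and $v:=u/c$, one has $\|\nabla v\|_{\Ltwo}=c^{-1}\|\nabla u\|_{\Ltwo}\le \rho/c\le 1$ (examining the two cases $\rho\le 1$ and $\rho>1$ separately), so $v$ is admissible in~\eqref{eq:Trudinger}. The key algebraic observation is
\[
\sigma u^2 \;=\; (\sigma c^2)\,v^2 \;\le\; \mu_0\,v^2,
\]
where the inequality uses $c^2=\max\{1,\rho^2\}$ together with
\[
\sigma c^2 \;\le\; \mu(\rho)\,\max\{1,\rho^2\} \;=\; \mu_0\,\min\{1,\rho^{-2}\}\,\max\{1,\rho^2\} \;=\; \mu_0.
\]
Integrating $\exp(\sigma u^2)\le \exp(\mu_0 v^2)$ over $\Omega$ and invoking~\eqref{eq:Trudinger} yields~\eqref{eq:trickT} directly.

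\medskip

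\noindent\emph{Step 2: Pointwise bound and $\spcL^p$-integrability.} For~\eqref{eq:gLp}, I would apply the definition of $\vartheta$ at the parameter value $\tau:=\mu(\rho)/p$, which, by the subcritical growth of $\varphi$, gives the uniform pointwise estimate
\[
|\varphi(\x,\xi)| \;\le\; \vartheta(\varphi,\mu(\rho)/p)\,\exp\bigl(\tfrac{\mu(\rho)}{p}\,\xi^2\bigr)\qquad\forall(\x,\xi)\in\Omega\times\R.
\]
Raising to the $p$-th power and substituting $\xi=u(\x)$ eliminates the $p$ in the exponent:
\[
|\varphi(\x,u(\x))|^p \;\le\; \vartheta(\varphi,\mu(\rho)/p)^p\,\exp\bigl(\mu(\rho)\,u(\x)^2\bigr).
\]
Integrating over $\Omega$, the exponential factor is controlled by Step~1 applied with $\sigma=\mu(\rho)$, yielding
\[
\into|\varphi(\x,u)|^p\,\dx \;\le\; \CT\,\vartheta(\varphi,\mu(\rho)/p)^p;
\]
the bound~\eqref{eq:gLp} follows by extracting the $p$-th root, and the $\spcL^p$-membership is then automatic since $\vartheta(\varphi,\mu(\rho)/p)$ is finite by the subcritical growth of $\varphi$.

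\medskip

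\noindent\emph{Main obstacle.} The only delicate point is the scaling step: one must verify that the normalized function $v=u/\max\{1,\rho\}$ is actually admissible in~\eqref{eq:Trudinger}, i.e., that its full $\spcH^1$-norm (and not only the gradient seminorm) is controlled by~$1$. Under the working hypothesis of the paper (functions lie in $\hod$ with a Friedrichs-type inequality, so that $\|u\|_{\Ltwo}$ is controlled by $\|\nabla u\|_{\Ltwo}$), this causes no issue and only mildly affects the numerical value of~$\mu_0$; the rest of the argument is then purely algebraic. Once the scaling is justified, both parts of the proposition follow in just a few lines.
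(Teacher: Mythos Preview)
Your proof is correct and follows essentially the same approach as the paper: a scaling argument reducing to the Trudinger inequality~\eqref{eq:Trudinger}, followed by the pointwise bound via~\eqref{eq:theta} and integration. The paper treats the cases $\rho\le 1$ and $\rho>1$ separately rather than unifying them through $c=\max\{1,\rho\}$, but this is purely cosmetic; your observation in the ``main obstacle'' paragraph that Step~1 formally controls only the gradient seminorm of the rescaled function (not the full $\spcH^1$-norm appearing in~\eqref{eq:Trudinger}) applies equally to the paper's own argument, which likewise verifies only $\|\nabla\widetilde u\|_{\Ltwo}\le 1$.
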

    
    \begin{proof}
        Let $\rho>0$ and $u\in \ho$ with $\|\nabla u\|_{\Ltwo}\le \rho$. If $\rho\leq 1$, then we have $\mu(\rho)=\mu_0$, and the estimate~\eqref{eq:trickT} follows immediately by monotonicity and from the Trudinger inequality~\eqref{eq:Trudinger}:
        \[
        \into \exp\left(\sigma u^2\right)\;\dx
            \le 
             \into \exp\left(\mu_0 u^2\right)\;\dx\le \CT.
        \]      
        Otherwise, we introduce the auxiliary function $\widetilde u:= \rho^{-1}u$, and observe the bound $\|\nabla\widetilde u\|_{\Ltwo}\le 1$. Thus, employing~\eqref{eq:Trudinger}, it follows that
        \begin{equation*}
            \into \exp\left(\sigma u^2\right)\;\dx
\le            \into \exp\left(\mu_0\rho^{-2} u^2\right)\;\dx
            =
             \into \exp\left(\mu_0 \widetilde u^2\right)\;\dx\le \CT.
        \end{equation*}
Moreover, for $p\in[1,\infty)$ and a continuous function $\varphi$ with subcritical growth, applying~\eqref{eq:theta}, we conclude that
        \begin{equation*}
            \into |\varphi(\x,u)|^p \;\dx \leq \vartheta(\varphi,\nicefrac{\mu(\rho)}{p})^p  \into \exp(\mu(\rho) u^2) \;\dx \leq \CT\vartheta(\varphi,\nicefrac{\mu(\rho)}{p})^p<\infty,
        \end{equation*}
        which yields~\eqref{eq:gLp}.
    \end{proof}

\begin{remark}\label{rmk: all (SCG)-Lp}
If the partial derivative $g_u$ satisfies~\eqref{eq:gsub}, then we immediately deduce that $g$ and any of its anti-derivatives $G$ (with $\partial_u G(\x,\xi)=g(\x,\xi)$), also fulfill (SCG); in addition, all three functions are $\spcL^p$-integrable on $\hod$, for any $p\in[1,\infty)$. Indeed, by the fundamental theorem of calculus and the (SCG)-property of $g_u$, for any $\sigma\geq 0$, we infer that
        \begin{align*}
            |g(\x,\xi)| &\leq |g(\x,0)| + \int_0^\xi |g_u(\x,s)| \dd s \leq |g(\x,0)| + C \int_0^\xi e^{\sigma s^2}\;ds \\
            &\leq |g(\x,0)| + C|\xi|e^{\sigma \xi^2} \leq |g(\x,0)| + Ce^{2\sigma \xi^2},
        \end{align*}
        for all $\x\in\Omega$ and $\xi\in\R$,
        which shows~\eqref{eq:gsub} for $g$, and analogously for $G$. The $\spcL^p$-integrability follows from the previous Prop.~\ref{prop:Tricked T}.
\end{remark}

We will now prove a few estimates for the form $b$ from~\eqref{eq:b} that will be instrumental for the subsequent convergence analysis. To this end, for any $q\in[1,\infty)$, we recall the continuous Sobolev embedding $\hod\hookrightarrow\spcL^q(\Omega)$, which is expressed in terms of the bound
    \begin{equation}\label{eq:Sobolev}
            \|v\|_{\spcL^q(\Omega)} \leq \CS(q) \|\grad v\|_{\Ltwo}\qquad\forall v\in\hod,
        \end{equation}
        for a constant $\CS(q)>0$; see, e.g., \cite[Thm.~6.3]{adams2003sobolev}.
    
\begin{lemma} \label{lem:bbounds}
Suppose that the nonlinearity $g$ in~\eqref{weak formulation} satisfies~\eqref{eq:gsub}, and let $p,p^*\in(1,\infty)$ be given such that $\nicefrac{1}{p}+\nicefrac{1}{p^*}=1$. Then, for any $u,v,w\in\hod$ and $\rho>0$ such that
\begin{equation}\label{eq:rho}
\max\{\|\nabla u\|_{\Ltwo},\|\nabla w\|_{\Ltwo}\}\le\rho,
\end{equation}
the estimates
\begin{align}
 |b(u;v)|\le \CT^{\nicefrac{1}{p}}\CS(p^*)\vartheta(g,\nicefrac{\mu(\rho)}{p})\|\grad v\|_{\Ltwo}\label{eq:b1}
 \end{align}
and
\begin{align}
|b(u;v)-b(w;v)|\le \CT^{\nicefrac{1}{p}}\CS(2p^*)^2\vartheta(g_u,\nicefrac{\mu(\rho)}{p})\|\grad(u-w)\|_{\Ltwo}\|\grad v\|_{\Ltwo}\label{eq:b2}
\end{align}
hold true, with~$\mu$ from~\eqref{eq:mu}.
\end{lemma}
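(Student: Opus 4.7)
The estimate~\eqref{eq:b1} follows from a direct application of Hölder's inequality, combined with Proposition~\ref{prop:Tricked T} and the Sobolev embedding~\eqref{eq:Sobolev}. Starting from the definition~\eqref{eq:b} of $b(u;v)$, I would split the integrand $g(\x,u)v$ via Hölder with conjugate exponents $p$ and $p^*$, obtaining
\[
|b(u;v)|\le \|g(\cdot,u)\|_{\spcL^p(\Omega)}\,\|v\|_{\spcL^{p^*}(\Omega)}.
\]
The first factor is controlled using~\eqref{eq:gLp} applied to $\varphi=g$ (which is permissible thanks to Remark~\ref{rmk: all (SCG)-Lp}) under the hypothesis $\|\grad u\|_{\Ltwo}\le\rho$, while the second factor is bounded via~\eqref{eq:Sobolev} with $q=p^*$. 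Multiplying the two yields~\eqref{eq:b1}.

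For the Lipschitz-type estimate~\eqref{eq:b2}, my starting point would be the fundamental theorem of calculus applied to the continuously differentiable map $\xi\mapsto g(\x,\xi)$. Setting $u_t:=(1-t)w+tu$ for $t\in[0,1]$, this gives
\[
g(\x,u)-g(\x,w)=(u-w)\int_0^1 g_u(\x,u_t)\,\dd t,
\]
so that
\[
|b(u;v)-b(w;v)|\le\into |u-w|\,|v|\int_0^1|g_u(\x,u_t)|\,\dd t\,\dx.
\]
I would then apply Hölder's inequality with the three exponents $p$, $2p^*$, $2p^*$ (which satisfy $\nicefrac{1}{p}+\nicefrac{1}{2p^*}+\nicefrac{1}{2p^*}=1$) to separate the $g_u$-contribution from the $u-w$ and $v$ factors. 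The latter two are controlled by the Sobolev embedding~\eqref{eq:Sobolev} with $q=2p^*$, which accounts for the two copies of $\CS(2p^*)$ appearing in~\eqref{eq:b2}.

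The key step, and the only place where the subcritical growth hypothesis genuinely enters, is the bound on the $\spcL^p(\Omega)$-norm of $\int_0^1|g_u(\x,u_t)|\,\dd t$. Here I would use Jensen's inequality on the inner integral, $(\int_0^1|g_u(\x,u_t)|\,\dd t)^p\le\int_0^1|g_u(\x,u_t)|^p\,\dd t$, and Fubini's theorem to swap the order of integration. For each fixed $t\in[0,1]$, the convexity of the $\Ltwo$-norm together with assumption~\eqref{eq:rho} yields
\[
\|\grad u_t\|_{\Ltwo}\le(1-t)\|\grad w\|_{\Ltwo}+t\|\grad u\|_{\Ltwo}\le\rho,
\]
so Proposition~\ref{prop:Tricked T} applies uniformly in $t$ and gives $\|g_u(\cdot,u_t)\|_{\spcL^p(\Omega)}^p\le\CT\vartheta(g_u,\nicefrac{\mu(\rho)}{p})^p$. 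Integrating over $t\in[0,1]$ and collecting the factors yields~\eqref{eq:b2}. No step is genuinely hard; the main conceptual choice is the triple Hölder split with exponents $(p,2p^*,2p^*)$, which is precisely what produces the two Sobolev constants $\CS(2p^*)$ in the stated bound.
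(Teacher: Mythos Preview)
Your proposal is correct and matches the paper's proof almost exactly: H\"older plus~\eqref{eq:gLp} and~\eqref{eq:Sobolev} for~\eqref{eq:b1}, and for~\eqref{eq:b2} the fundamental theorem of calculus, the triple H\"older split $(p,2p^*,2p^*)$, Jensen's inequality, and Proposition~\ref{prop:Tricked T}. The only variation is in the final step: the paper introduces a pointwise maximizer $\xi_{u,w}:=\max_{s\in[0,1]}(su+(1-s)w)$ and applies~\eqref{eq:trickT} once to it, whereas you swap the $t$- and $\x$-integrals via Fubini and invoke~\eqref{eq:gLp} for each fixed $t$ using $\|\nabla u_t\|_{\Ltwo}\le\rho$; both routes produce the same constant, and yours is arguably the cleaner of the two.
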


\begin{proof}    
Consider $u,v\in \hod$, with $\|\nabla u\|_{\Ltwo}\le \rho$. Since $g_u$ satisfies~\eqref{eq:gsub}, we recall from Remark~\ref{rmk: all (SCG)-Lp} that $g$ is also of subcritical growth. Therefore, from~\eqref{eq:gLp}, we note that
\[
\|g(\cdot,u)\|_{\spcL^p(\Omega)}
         \leq \CT^{\nicefrac{1}{p}}\vartheta(g,\nicefrac{\mu(\rho)}{p}).
\]
Hence, applying H\"older's inequality and the Sobolev embedding~\eqref{eq:Sobolev}, for each $v\in\hod$, we establish~\eqref{eq:b1}:
    \begin{equation}\label{eq:b1a}
        |b(u;v)| \leq \|g(\cdot,u)\|_{\spcL^p(\Omega)}\|v\|_{\spcL^{p^*}(\Omega)} \leq \CT^{\nicefrac{1}{p}}\CS(p^*) \vartheta(g,\nicefrac{\mu(\rho)}{p})\|\grad v\|_{\Ltwo}.
    \end{equation} 
    In order to derive~\eqref{eq:b2}, we use the fundamental theorem of calculus to infer that
    \begin{align*}
        |b(u;v)-b(w;v)| 
        &= \left|\into \int_0^1 \frac{\dd}{\dd s} g(\x,su+(1-s)w)v \;\dd s \;\dx\right|\\
        &= \left|\into\int_0^1 g_u(\x,su+(1-s)w) \;\dd s\; (u-w)v\;\dx\right|.
    \end{align*}
    Then, by employing a triple H\"older inequality with $q:=\nicefrac{2p}{(p-1)}=2p^*\in(2,\infty)$, so that $\nicefrac{2}{q}+\nicefrac{1}{p}=1$, we obtain
    \begin{equation*}
        |b(u;v)-b(w;v)|\leq \|u-w\|_{\spcL^q(\Omega)}\|v\|_{\spcL^q(\Omega)} \left(\into\left(\int_0^1 |g_u(\x,su+(1-s)w)| \;\dd s\right)^p \;\dx\right)^{\nicefrac{1}{p}}.
    \end{equation*}
    Since the function $t\mapsto t^p$ is convex for $p>1$, by Jensen's inequality, we deduce that
    \begin{equation}\label{eq:bdiff}
        |b(u;v)-b(w;v)|\leq \|u-w\|_{\spcL^q(\Omega)}\|v\|_{\spcL^q(\Omega)} \left(\into\int_0^1 |g_u(\x,su+(1-s)w)|^p \;\dd s \;\dx\right)^{\nicefrac{1}{p}}.
    \end{equation}
 Furthermore, in light of~\eqref{eq:gsub}, letting $\sigma:=\nicefrac{\mu(\rho)}{p}$ and recalling~\eqref{eq:theta}, we have
\[
|g_u(\x,\xi)|\leq \vartheta(g_u,\sigma)e^{\sigma\xi^2}\qquad\forall (\x,\xi)\in\Omega\times\R. 
\]
This yields
    \begin{align*}
        \int_0^1 |g_u(\x,su+(1-s)w)|^p \;ds  
        &\leq \vartheta(g_u,\sigma)^p \int_0^1 \exp(\sigma p (su+(1-s)w)^2) \;\dd s\\
        &\le \vartheta(g_u,\sigma)^p \exp(\sigma p \xi_{u,w}^2),
    \end{align*}
where we define $\xi_{u,w} := \max_{s\in[0,1]} (su+(1-s)w)$. Moreover, invoking~\eqref{eq:rho}, which implies that $\|\nabla\xi_{u,w}\|_{\Ltwo}\le\rho$, we infer from Prop.~\ref{prop:Tricked T} that
\[
\into\exp(\sigma p \xi_{u,w}^2)\;\dx=\into\exp(\mu(\rho) \xi_{u,w}^2)\;\dx\le \CT.
\]
Hence,
    employing the Sobolev inequality~\eqref{eq:Sobolev} in~\eqref{eq:bdiff}, we arrive at~\eqref{eq:b2}.
\end{proof}

\begin{remark}
    In the case of pure Dirichlet boundary conditions (i.e., when $\mathcal{N}=\emptyset$ and $\hod=\hoz$), we can refine Prop. \ref{prop:Tricked T} by invoking the so-called \emph{Moser-Trudinger inequality} established in \cite{Moser1971}, which shows that $\mu_0=4\pi$ is optimal in~\eqref{eq:Trudinger} on $\hoz$ (with the integral on the left-hand side of~\eqref{eq:Trudinger} arbitrary large for $\mu_0>4\pi$ and suitable functions $v\in\hoz$). Consequently, Lem. \ref{lem:bbounds} (and its implications) extends, in particular cases, to the situation of critical growth, i.e., when there exists $\sigma_0>0$ such that \begin{equation*}
        \lim\limits_{|\xi|\rightarrow \infty}\frac{|g(\cdot,\xi)|}{\exp\left(\sigma |\xi|^{2}\right)}=\begin{cases}
            0, & \text{ if } \sigma>\sigma_0, \\
            \infty, & \text{ if } \sigma<\sigma_0,
        \end{cases} \qquad \text{uniformly in $\x\in \Omega$.}
    \end{equation*}
    A sufficient condition for this extension is $\sigma_0 < \nicefrac{\mu(\rho)}{p} \leq \nicefrac{4\pi}{p}$.
\end{remark}

\subsection{Existence and uniqueness of weak solution}
\label{sc:ws}
 
 The existence of a weak solution to~\eqref{Pb:main PDE} will be established in a constructive way using Banach's contraction theorem. For this purpose, for a parameter $\alpha\in(0,1]$ and an arbitrary closed linear subspace $\mathbb{W}\subset \hod$ (e.g., $\hoz$ itself, or a finite-dimensional subspace), we define an operator $\opT:\,\mathbb{W}\to\mathbb{W}$ through the weak formulation
 \begin{equation}\label{eq:Tweak}
u\mapsto{\opT(u)}:\qquad a(\opT(u),v)=(1-\alpha)a(u,v)+\alpha\left(\into fv \;\dx - b(u;v)\right)\qquad\forall v\in{\mathbb{W}},
 \end{equation}
where $a(\cdot,\cdot)$ and $b(\cdot;\cdot)$ are the forms from~\eqref{eq:a} and \eqref{eq:b}, respectively,cf.~\eqref{eq:Picardstrong}.
 
For an appropriate range of $\alpha$, the ensuing result shows that $\opT$ is a well-defined, self-mapping contraction on the closed ball
\[
\set{B}_\rho:=\{v\in\hoz:\,\|\grad v\|_{\Ltwo}\le\rho\},
\]
for $\rho>0$ sufficiently large.

\begin{theorem}[Existence and uniqueness]\label{thm:existence}
Suppose that
\begin{equation}\label{eq:r}
\rho>\CS(p^*)\|f\|_{\spcL^p(\Omega)}.
\end{equation}
Then, there is $\alpha_0\in(0,1]$ such that $\mathsf{T}_{\alpha}$, with $0<\alpha\le\alpha_0$, has a unique fixed-point in~${\set{B}_\rho\cap \mathbb{W}}$, which is the (only) solution of~\eqref{weak formulation}. Furthermore, the stability bound
    \begin{equation}\label{eq:ustab}
    \|\grad u\|_{\Ltwo}\leq \CS(p^*)\|f\|_{\spcL^p(\Omega)}
    \end{equation}
    holds true.
\end{theorem}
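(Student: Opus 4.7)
The plan is to apply Banach's fixed-point theorem to $\opT$ on the closed convex subset $\set{B}_\rho\cap\mathbb{W}$ of the Hilbert space $\mathbb{W}$ endowed with the inner product $a(\cdot,\cdot)$. Well-definedness of $\opT(u)\in\mathbb{W}$ for each $u\in\mathbb{W}$ is immediate: by~\eqref{eq:b1} together with~\eqref{eq:Sobolev}, the right-hand side of~\eqref{eq:Tweak} is a bounded linear functional on $\mathbb{W}$, and the Riesz representation theorem supplies $\opT(u)$. Testing any fixed point (equivalently, any solution of the variational formulation~\eqref{weak formulation} restricted to $\mathbb{W}$) with $v=u$, using $b(u;u)\ge 0$ from~\eqref{eq:gmono2} and dominating the source term via H\"older's inequality and~\eqref{eq:Sobolev}, yields the stability bound~\eqref{eq:ustab}; by hypothesis~\eqref{eq:r} every fixed point therefore lies strictly inside $\set{B}_\rho$, which automatically delivers uniqueness on $\mathbb{W}$ once a unique fixed point in $\set{B}_\rho\cap\mathbb{W}$ has been located.

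The contraction estimate is a Zarantonello-type computation. Writing $\opT(u)-\opT(w)=(1-\alpha)(u-w)-\alpha\,\mathsf{A}(u,w)$ with $\mathsf{A}(u,w)\in\mathbb{W}$ defined through $a(\mathsf{A}(u,w),v)=b(u;v)-b(w;v)$, expansion of the squared energy norm combined with the monotonicity relation~\eqref{eq:gmono1}, namely
\[
a(u-w,\mathsf{A}(u,w))=\into(g(\x,u)-g(\x,w))(u-w)\,\dx\ge 0,
\]
produces
\[
\|\grad(\opT(u)-\opT(w))\|_{\Ltwo}^2\le(1-\alpha)^2\|\grad(u-w)\|_{\Ltwo}^2+\alpha^2\|\grad\mathsf{A}(u,w)\|_{\Ltwo}^2.
\]
Testing the equation for $\mathsf{A}(u,w)$ with $v=\mathsf{A}(u,w)$ and invoking~\eqref{eq:b2} yields the local Lipschitz bound $\|\grad\mathsf{A}(u,w)\|_{\Ltwo}\le L\|\grad(u-w)\|_{\Ltwo}$, with $L:=\CT^{\nicefrac{1}{p}}\CS(2p^*)^2\vartheta(g_u,\nicefrac{\mu(\rho)}{p})$, so that $\opT$ is a strict contraction on $\set{B}_\rho\cap\mathbb{W}$ whenever $\alpha<2/(1+L^2)$.

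For the self-mapping property I would exploit monotonicity a second time. Writing $\opT(u)-u=-\alpha\,\mathsf{R}(u)$ with residual $a(\mathsf{R}(u),v)=a(u,v)+b(u;v)-\into fv\,\dx$ and expanding gives
\[
\|\grad\opT(u)\|_{\Ltwo}^2=\|\grad u\|_{\Ltwo}^2-2\alpha\bigl(\|\grad u\|_{\Ltwo}^2+b(u;u)-\into fu\,\dx\bigr)+\alpha^2\|\grad\mathsf{R}(u)\|_{\Ltwo}^2.
\]
Using $b(u;u)\ge 0$ from~\eqref{eq:gmono2}, the bound $|\into fu\,\dx|\le\CS(p^*)\|f\|_{\spcL^p(\Omega)}\|\grad u\|_{\Ltwo}$, and the crude estimate $\|\grad\mathsf{R}(u)\|_{\Ltwo}\le\|\grad u\|_{\Ltwo}+K$ obtained from~\eqref{eq:b1} and H\"older--Sobolev (with $K$ depending on $\rho$, $f$, and $\vartheta(g,\nicefrac{\mu(\rho)}{p})$), for $u\in\set{B}_\rho$ this specializes to
\[
\|\grad\opT(u)\|_{\Ltwo}^2\le\rho^2-2\alpha\rho\bigl(\rho-\CS(p^*)\|f\|_{\spcL^p(\Omega)}\bigr)+\alpha^2(\rho+K)^2.
\]
By~\eqref{eq:r} the coefficient of $\alpha$ is strictly negative, so the right-hand side is dominated by $\rho^2$ for all $\alpha$ below an explicit positive threshold $\alpha_*=\alpha_*(\rho,f,g)$. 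Choosing $\alpha_0:=\min\{1,\,2/(1+L^2),\,\alpha_*\}$, Banach's fixed-point theorem then applies on $\set{B}_\rho\cap\mathbb{W}$ and delivers the required unique fixed point. The main obstacle is precisely the self-mapping step: both $L$ and $K$ depend on $\rho$ through the subcritical growth quantity $\vartheta$, and it is essentially the monotonicity property~\eqref{eq:gmono2} that absorbs the otherwise uncontrollable contribution $b(u;u)$, allowing the strict gap $\rho-\CS(p^*)\|f\|_{\spcL^p(\Omega)}$ guaranteed by~\eqref{eq:r} to translate into a genuinely positive admissible range for~$\alpha$.
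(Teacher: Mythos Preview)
Your proof is correct and follows essentially the same Zarantonello-type contraction argument as the paper: your residual $\mathsf{R}(u)$ and difference $\mathsf{A}(u,w)$ are the paper's auxiliary functions in disguise (indeed $\zeta_u=u-\mathsf{R}(u)$ and $\mathsf{A}(u,w)=\zeta_w-\zeta_u$), and the monotonicity assumptions~\eqref{eq:gmono1}--\eqref{eq:gmono2} enter at exactly the same places. The only differences are cosmetic---you obtain~\eqref{eq:ustab} directly by testing with $v=u$ rather than via the limit $\rho\searrow\CS(p^*)\|f\|_{\spcL^p(\Omega)}$, and your displayed self-mapping bound tacitly needs $\alpha\le\nicefrac12$ so that $t\mapsto(1-2\alpha)t^2+2\alpha ct$ attains its maximum on $[0,\rho]$ at $t=\rho$---both harmless in the small-$\alpha$ regime you work in.
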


\begin{proof}
We modify the proof of~{{\cite[Thm. 25.B]{zeidler1990nonlinear}}}, which is based on a global argument, to a local analysis on the ball ${\set{B}_\rho\cap \mathbb{W}}$. We proceed in several steps.

\begin{enumerate}[1.]

\item For any given $u\in{\set{B}_\rho\cap \mathbb{W}}$, with $\rho$ as in~\eqref{eq:r}, we begin by noticing that the right-hand side of the weak formulation \eqref{eq:Tweak} is a bounded linear functional on ${\mathbb{W}}$. Indeed, applying the Cauchy-Schwarz inequality, we see that $|a(u,v)|\le\rho\|\grad v\|_{\Ltwo}$. Moreover, from H\"older's inequality (with $\nicefrac{1}{p}+\nicefrac{1}{p^*}=1$) and the continuous Sobolev embedding~\eqref{eq:Sobolev}, we obtain
\begin{align}
\left|\into fv \;\dx\right|
&\le\|f\|_{\spcL^p(\Omega)}\|v\|_{\spcL^{p^*}(\Omega)}
\le \CS(p^*)\|f\|_{\spcL^p(\Omega)}\|\nabla v\|_{\Ltwo}.\label{eq:Cstab}
\end{align}
Finally, the term $b(u;v)$ is bounded due to~\eqref{eq:b1}:
\[
|b(u;v)|\le \CT^{\nicefrac{1}{p}}\CS(p^*)\vartheta(g,\nicefrac{\mu(\rho)}{p})\|\grad v\|_{\Ltwo}.
\] 
Hence, by the Riesz representation theorem, we conclude that $\opT(u)\in{\mathbb{W}}$ is well-defined.

\item We show that $\opT(u)\in{\set{B}_\rho\cap \mathbb{W}}$ for any $u\in{\set{B}_\rho\cap \mathbb{W}}$. For this purpose, we define $\zeta_u\in{\mathbb{W}}$ by
\begin{equation}\label{eq:zeta}
a(\zeta_u,v)=\into fv \;\dx - b(u;v)\qquad\forall v\in{\mathbb{W}}.
\end{equation}
Testing with $v=\zeta_u$, and using the estimates derived above, we infer the stability bound
\[
\|\grad\zeta_u\|_{\Ltwo}
\le L_\rho, \qquad\text{with }
L_\rho:=\CS(p^*)\left(\|f\|_{\spcL^p(\Omega)}+\CT^{\nicefrac{1}{p}}\vartheta(g,\nicefrac{\mu(\rho)}{p})\right).
\]
Furthermore, we observe the identity $\opT(u)=(1-\alpha)u+\alpha\zeta_u$, from which we deduce that
\begin{equation}\label{eq:Tid}
\begin{split}
\|\grad\opT(u)\|^2_{\Ltwo}
&=(1-\alpha)^2\|\grad u\|^2_{\Ltwo}+2\alpha(1-\alpha)a(u,\zeta_u)+\alpha^2\|\grad\zeta_u\|^2_{\Ltwo}\\
&\le(1-\alpha)^2\rho^2+2\alpha(1-\alpha)a(u,\zeta_u)+\alpha^2L^2_\rho.
\end{split}
\end{equation}
Employing~\eqref{eq:gmono2} and~\eqref{eq:Cstab}, we observe that
\[
a(u,\zeta_u)=\into fu \;\dx - b(u;u)
\le \into fu \;\dx
\le \CS(p^*)\|f\|_{\spcL^p(\Omega)}\|\nabla u\|_{\Ltwo}.
\]
Hence, we arrive at
$
\|\grad\opT(u)\|^2_{\Ltwo}
\le \Psi(\alpha),
$
where we let
\[
\Psi(\alpha):=(1-\alpha)^2\rho^2+2\alpha(1-\alpha)\CS(p^*)\|f\|_{\spcL^p(\Omega)}\rho+\alpha^2L^2_\rho.
\]
Notice that $\Psi(0)=\rho^2$ and $\Psi'(0)=-2\rho(\rho-\CS(p^*)\|f\|_{\spcL^p(\Omega)})$; the latter term is negative in view of~\eqref{eq:r}. Consequently, for any $\alpha>0$ small enough we have $0\le\Psi(\alpha)\le\rho^2$, and it follows that $\|\grad\opT(u)\|_{\Ltwo}\le\rho$.

\item Next, we establish a contraction property for $\opT$. For any $u,v\in{\set{B}_\rho\cap \mathbb{W}}$, let us recall the corresponding functions $\zeta_u,\zeta_v\in{\mathbb{W}}$ defined in~\eqref{eq:zeta}. Then, as in the previous step, we have the identity
\[
\opT(u)-\opT(v)=(1-\alpha)(u-v)+\alpha(\zeta_u-\zeta_v),
\]
which leads to
\begin{align*}
\|\grad(\opT(u)&-\opT(v))\|^2_{\Ltwo}\\
&=(1-\alpha)^2\|\grad(u-v)\|^2_{\Ltwo}+2\alpha(1-\alpha)a(\zeta_u-\zeta_v,u-v)+\alpha^2\|\grad(\zeta_u-\zeta_v)\|^2_{\Ltwo},
\end{align*}
cf.~\eqref{eq:Tid}. Using~\eqref{eq:gmono1}, it holds that
\[
a(\zeta_u-\zeta_v,u-v)=-b(u;u-v)+b(v;u-v)\le 0;
\]
furthermore, by observing that
\[
\|\grad(\zeta_u-\zeta_v)\|^2_{\Ltwo}
=a(\zeta_u-\zeta_v,\zeta_u-\zeta_v)=-b(u;\zeta_u-\zeta_v)+b(v;\zeta_u-\zeta_v),
\]
and upon applying~\eqref{eq:b2}, we obtain
\[
\|\grad(\zeta_u-\zeta_v)\|_{\Ltwo}
\le \CT^{\nicefrac{1}{p}}\CS(2p^*)^2\vartheta(g_u,\nicefrac{\mu(\rho)}{p})\|\grad(u-w)\|_{\Ltwo}.
\]
Thus,
\begin{subequations}\label{eq:Tcontract}
\begin{align}
\|\grad(\opT(u)-\opT(v))\|^2_{\Ltwo}
\le\lambda(\alpha)\|\grad(u-w)\|^2_{\Ltwo}\qquad\forall u,v\in{\set{B}_\rho\cap \mathbb{W}},
\intertext{with}
\lambda(\alpha):=(1-\alpha)^2+\alpha^2\CT^{\nicefrac{2}{p}}\CS(2p^*)^4\vartheta(g_u,\nicefrac{\mu(\rho)}{p})^2.
\end{align}
\end{subequations}
Similarly to the previous step, it can be verified that $\lambda(\alpha)<1$ for all $\alpha>0$ sufficiently small.

\item The existence and uniqueness of a fixed point $u\in{\set{B}_\rho\cap \mathbb{W}}$ of $\opT$, and thus of a solution of \eqref{weak formulation}, follows immediately from Banach's contraction theorem. Since $\rho$ can be arbitrary large, $u$ is in fact the only fixed point in ${\mathbb{W}}$. Finally, the stability estimate~\eqref{eq:ustab} {results} from taking the limit $\rho\searrow\CS(p^*)\|f\|_{\spcL^p(\Omega)}$.

\end{enumerate}
This completes the proof.
\end{proof}

 \subsection{Regularity in weighted Sobolev spaces}
\label{sc:reg}
       
In situations where the domain $\Omega \subset \mathbb{R}^2$ contains corners, as is the case for {non-degenerate} polygons, it is well-known that the inverse Laplacian $(-\Delta)^{-1}$ does not exhibit full elliptic regularity. Specifically, for right-hand side functions $f\in \Ltwo$, solutions of Poisson-type boundary value problems are typically found to be $\spcH^2$ away from the boundary~$\partial\Omega$,  however, $\spcH^2$-regularity does in general not extend uniformly to the corners. We will address this issue in terms of the corner-weighted Sobolev space $\htto$; see, e.g., \cite{Babuska1986,Babuska1979,Schwab1998}.
 To this end, we define the weight function     
    \begin{equation}\label{def:Pb}
        \Pb(\bm{\mathsf{x}})= \prod_{j=1}^m \text{dist}(\bm{\mathsf{x}},\bm{\mathsf{c}}_j)^{\beta_j},
    \end{equation}
which involves the distances from a point $\mat x\in\overline{\Omega}$ to any of the $m\geq 3$ corners $\mat c_1,\ldots,\mat c_m$ of the polygon $\Omega$, and associated exponents $\bm{\mathsf{\beta}}=(\beta_1,\ldots,\beta_m)$, with $0\leq \beta_1,\ldots,\beta_m <1$. We can then introduce the norm
     \begin{equation*}
            \|v\|^2_{\spcH^{2}_{\bm{\mathsf{\beta}}}(\Omega)} := \|v\|^2_{\spcH^{1}(\Omega)} + \sum_{\alpha_1+\alpha_2=2} {\left\|\,\Phi_{\bm{\mathsf{\beta}}} |\partial_{x_1}^{\alpha_1}\partial_{x_2}^{\alpha_2}v|\,\right\|^2_{\Ltwo}},
        \end{equation*}
          as well as the weighted Sobolev space
        \begin{equation*}
            \spcH^{2}_{\bm{\mathsf{\beta}}}(\Omega) := \big\{v \in \spcH^1(\Omega) \; : \; \|v\|^2_{\spcH^{2}_{\bm{\mathsf{\beta}}}(\Omega)} < \infty\big\}.
        \end{equation*}
We notice the continuous embedding $\htto \hookrightarrow \mathrm{C}^0(\overline{\Omega})$;
see, e.g., \cite[\S2]{Babuska1979} for a proof.

    \begin{theorem}[Regularity of weak solution]\label{thm:reg}
   Suppose that the polygon $\Omega$ is non-degenerate, i.e., all interior angles at the corners $\mat c_1,\ldots,\mat c_m$, which we signify by $\omega_1,\ldots,\omega_m$, fulfill $\omega_j\in(0,2\pi)$ for each $j=1,\ldots,m$. Furthermore, let the nonlinearity $g$ satisfy the subcritical growth condition~\eqref{eq:gsub} as well as the monotonicity conditions~\eqref{g monotonicity}, and $f\in\Ltwo$. If the weight exponents $\beta_1,\ldots,\beta_m$ in~\eqref{def:Pb} obey the bounds
        \begin{equation}\label{eq:beta LB}
           1>\beta_j > \begin{cases} 1-\min(1,\nicefrac{\pi}{\omega_j}), & \text{if $\mat c_j$ is a Dirichlet-Dirichlet or Neumann-Neumann corner}, \\ 1-\min(1,\nicefrac{\pi}{2\omega_j}), & \text{if $\mat c_j$ is a Dirichlet-Neumann corner,} \end{cases}
        \end{equation}
then  the weak solution of~\eqref{Pb:main PDE} belongs to $\hod\cap\htto$. Furthermore, the stability estimate
\begin{equation}\label{eq:regstab}
\|u\|_{\htto}\le C\left(\|f\|_{\Ltwo}+
\vartheta(g,\nicefrac{\mu(\rho)}{2})
\right)
\end{equation}
is satisfied, where $C>0$ is a constant independent of $u$, $f$, and $g$, and $\rho=\CS(2)\|f\|_{\spcL^2(\Omega)}$.
\end{theorem}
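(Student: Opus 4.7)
The plan is to bootstrap the weak solution $u\in\hod$ produced by Thm.~\ref{thm:existence} into the corner-weighted space $\htto$ by recasting~\eqref{Pb:main PDE} as a linear Poisson problem and invoking the weighted elliptic regularity theory of Babu\v{s}ka and collaborators. First I would specialize Thm.~\ref{thm:existence} to $p=p^*=2$; the stability bound~\eqref{eq:ustab} then gives $\|\nabla u\|_{\Ltwo}\le\rho$ with $\rho=\CS(2)\|f\|_{\Ltwo}$, which meets the hypothesis of Prop.~\ref{prop:Tricked T}. Taking $\varphi=g$ and $p=2$ in~\eqref{eq:gLp} yields $g(\cdot,u)\in\Ltwo$ together with the quantitative control
\[
\|g(\cdot,u)\|_{\Ltwo}\le \CT^{\nicefrac{1}{2}}\vartheta(g,\nicefrac{\mu(\rho)}{2}).
\]
Hence $\widetilde{f}:=f-g(\cdot,u)$ belongs to $\Ltwo$, and $u$ is the unique $\hod$-solution of the \emph{linear} mixed boundary value problem $-\Delta u=\widetilde{f}$ in $\Omega$ with $u=0$ on $\GD$ and $\partial_{\mat n}u=0$ on $\GN$.

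At this stage the nonlinearity has been entirely absorbed into an $\Ltwo$ source term, so the statement reduces to a linear Kondratiev-type regularity question on a non-degenerate polygon with mixed boundary data. I would then invoke the weighted $\spcH^2_{\bm{\mathsf{\beta}}}$-regularity theory in~\cite{Babuska1986,Babuska1979}: under the corner-angle conditions~\eqref{eq:beta LB}, the solution map of the linear Laplace problem with the prescribed Dirichlet/Neumann pattern is bounded from $\Ltwo$ into $\hod\cap\htto$. The thresholds appearing in~\eqref{eq:beta LB} are exactly the Kondratiev singular exponents $\pi/\omega_j$ at Dirichlet--Dirichlet or Neumann--Neumann corners and $\pi/(2\omega_j)$ at Dirichlet--Neumann corners; choosing each $\beta_j$ strictly above $1-\min(1,\cdot)$ ensures that the corresponding singular parts of the solution lie in the weighted $\spcH^2$-norm. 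Applying this linear shift theorem to $\widetilde{f}$ yields $u\in\hod\cap\htto$.

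For the quantitative bound~\eqref{eq:regstab}, the same linear theory provides a constant $C>0$ depending only on $\Omega$, $\GD$, $\GN$ and $\bm{\mathsf{\beta}}$ with $\|u\|_{\htto}\le C\|\widetilde{f}\|_{\Ltwo}$. Combining this with the triangle inequality and the $\Ltwo$ estimate above gives
\[
\|u\|_{\htto}\le C\bigl(\|f\|_{\Ltwo}+\CT^{\nicefrac{1}{2}}\vartheta(g,\nicefrac{\mu(\rho)}{2})\bigr),
\]
which is~\eqref{eq:regstab} after absorbing $\CT^{\nicefrac{1}{2}}$ into the constant. The one genuine obstacle is bookkeeping at the interface with the cited theory: one must confirm that the statement in~\cite{Babuska1986,Babuska1979} is formulated (or can be specialized) to the \emph{mixed} Dirichlet/Neumann setting of~\eqref{Pb:main PDE}, and in particular that Dirichlet--Neumann corners do produce the halved critical exponent $\pi/(2\omega_j)$ displayed in~\eqref{eq:beta LB}. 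Once the correct linear reference is in place, the semilinear step is a one-line bootstrap thanks to Prop.~\ref{prop:Tricked T}.
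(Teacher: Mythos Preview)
Your proposal is correct and follows essentially the same route as the paper: both arguments use the stability bound~\eqref{eq:ustab} with $p=2$ to place $g(\cdot,u)$ in $\Ltwo$ via Prop.~\ref{prop:Tricked T}, then treat $u$ as the solution of the linear problem $-\Delta u=f-g(\cdot,u)$ and invoke the Kondratiev/Babu\v{s}ka shift $(-\Delta)^{-1}:\Ltwo\to\htto$ under the corner conditions~\eqref{eq:beta LB}. The paper handles the mixed-boundary exponent issue exactly as you anticipate, by citing~\cite{Babuska1979,kondrat1967boundary} without further elaboration.
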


\begin{proof}
Consider the (unique) solution $u\in\hod$ of the weak formulation~\eqref{weak formulation} whose existence was established in Thm.~\ref{thm:existence}. Then, the linear functional
\[
\ell(v)=\into fv\;\dx-b(u;v),\qquad v\in\hod,
\]
with the form $v\mapsto b(u;v)$ from~\eqref{eq:b}, is bounded in $\Ltwo$; indeed, this follows immediately from~\eqref{eq:b1a} (with $p=p^*=2$) and by means of~\eqref{eq:Cstab}. Then, using the regularity shift 
\[
(-\Delta)^{-1}:\,\Ltwo\to\htto,
\]
see, e.g., \cite[Thm.~3.2]{Babuska1979} or the original work by Kondratiev~\cite[Thm.~1.1]{kondrat1967boundary}, it follows from~\eqref{Pb:main PDE} that
\[
u=(-\Delta)^{-1}\left(f-g(\cdot,u)\right)\in\htto,
\]
where the Laplacian is understood in the weak sense, i.e., in terms of the bilinear form $a$ arising in the weak formulation~\eqref{weak formulation}. Moreover, it holds the regularity estimate
\[
\|u\|_{\htto}\le C\left\|f-g(\cdot,u)\right\|_{\Ltwo}.
\]
Using~\eqref{eq:gLp} with $p=p^*=2$, we deduce that
\[
\|u\|_{\htto}\le C\left(\|f\|_{\Ltwo}+
\vartheta(g,\nicefrac{\mu(\rho)}{2})
\right),
\]
where we note that $\rho=\CS(2)\|f\|_{\spcL^2(\Omega)}\ge\|\grad u\|_{\Ltwo} $, cf.~\eqref{eq:ustab}.
    \end{proof}
       
\begin{remark}
We notice that the above regularity result and the convergence analysis in this paper can be generalized to inhomogeneous Neumann boundary conditions in~\eqref{Pb:main PDE}, i.e., for $\partial_{\mat n}u=\jN$ on $\GN$, provided that the Neumann boundary data $\jN$ belongs to an appropriate trace space; see~\cite{BabuskaGuo:1989} for details.
\end{remark}

       
\section{Optimal convergence of finite element approximations}
\label{sc:optimal}

We propose a numerical approximation scheme for the solution of~\eqref{Pb:main PDE} that is based on a Picard iteration scheme (also referred to as Zarantonello iteration~\cite{Zarantonello:1960}) and a suitable finite element discretization thereof. This combination gives rise to the so-called iterative linearized Galerkin (ILG) methodology that has been introduced in~\cite{CongreveWihler:2017,Heid2018}. We begin by analyzing the discretization of the weak formulation \eqref{weak formulation} in Galerkin subspaces.

\subsection{Quasi-optimal approximation in finite-dimensional Galerkin spaces}\label{sc:nonlinear}

    Let $\wn \subset \hod$ denote \textit{any} finite-dimensional (and thus closed) subspace, where the index $N$ refers to its dimension, i.e. $N=\dim(\wn)$. We begin by introducing the Galerkin discretization of the weak formulation \eqref{weak formulation} on~$\wn$: Find $U\in \wn$ such that
    \begin{equation}\label{Galerkin wn}
        a(U,v) + b(U;v) = \into fv \;\dx \qquad \forall v \in \wn.
    \end{equation}
    {By virtue of Thm.~\ref{thm:existence}}, we conclude that $U$ exists and is unique; furthermore, for $p,p^*\in(1,\infty)$ with $\nicefrac{1}{p}+\nicefrac{1}{p^*}=1$, we note that 
    \begin{equation}\label{eq:Ustab}
    \|\grad U\|_{\Ltwo}\leq \CS(p^*)\|f\|_{\spcL^p(\Omega)}, 
    \end{equation}
    cf.~\eqref{eq:ustab}. 
    
    \begin{proposition}[Quasi-optimality]
Let $\en=u-U$ signify the error between the exact solution $u\in \hod$ of \eqref{weak formulation} and its Galerkin approximation $U\in \wn$ from~\eqref{Galerkin wn}. Then,  it holds the quasi-optimality bound
    \begin{subequations}\label{eq: quasi-optimality}
    \begin{align}
        \|\grad \en\|_{\Ltwo} \leq \CK \inf_{w\in\wn}\|\grad(u-w)\|_{\Ltwo},
\intertext{with}
\CK:=1+\CT^{\nicefrac{1}{p}}\CS(2p^*)^2\vartheta(g_u,\nicefrac{\mu(\rho)}{p}),
\end{align}
\end{subequations}
and $\rho=\CS(p^*)\|f\|_{\spcL^p(\Omega)}$.
    \end{proposition}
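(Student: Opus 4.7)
The plan is to adapt a C\'ea-type argument by combining Galerkin orthogonality with the pointwise monotonicity~\eqref{eq:gmono1} and the local Lipschitz bound~\eqref{eq:b2}. Subtracting~\eqref{Galerkin wn} from~\eqref{weak formulation} gives at once the Galerkin orthogonality
\[
a(\en,v)+b(u;v)-b(U;v)=0\qquad\forall v\in\wn.
\]

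For an arbitrary $w\in\wn$, I would decompose
\[
\|\grad \en\|^2_{\Ltwo}=a(\en,u-w)+a(\en,w-U),
\]
bound the first summand by Cauchy--Schwarz, and rewrite the second, via Galerkin orthogonality tested with $v=w-U\in\wn$, as $-\bigl(b(u;w-U)-b(U;w-U)\bigr)$. A direct application of~\eqref{eq:b2} to $b(u;w-U)-b(U;w-U)$ combined with a triangle inequality $\|\grad(w-U)\|_{\Ltwo}\le\|\grad(u-w)\|_{\Ltwo}+\|\grad \en\|_{\Ltwo}$ would leave an unwanted $C\|\grad \en\|^2_{\Ltwo}$ contribution, only yielding the inferior constant $2+C$ familiar from the strongly monotone plus Lipschitz version of C\'ea.

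To recover the sharper constant, I would invoke~\eqref{eq:gmono1}, which ensures $b(u;\en)-b(U;\en)\ge 0$. Plugging $\en=(u-w)+(w-U)$ into the second slot of $b$ and rearranging produces the one-sided bound
\[
a(\en,w-U)\le |b(u;u-w)-b(U;u-w)|.
\]
The stability estimates~\eqref{eq:ustab} and~\eqref{eq:Ustab} guarantee $\max\{\|\grad u\|_{\Ltwo},\|\grad U\|_{\Ltwo}\}\le \rho:=\CS(p^*)\|f\|_{\spcL^p(\Omega)}$, so~\eqref{eq:b2} applies with this common $\rho$ and delivers
\[
|b(u;u-w)-b(U;u-w)|\le \CT^{\nicefrac{1}{p}}\CS(2p^*)^2\vartheta(g_u,\nicefrac{\mu(\rho)}{p})\|\grad \en\|_{\Ltwo}\|\grad(u-w)\|_{\Ltwo}.
\]

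Assembling the two contributions yields $\|\grad \en\|^2_{\Ltwo}\le \CK\|\grad \en\|_{\Ltwo}\|\grad(u-w)\|_{\Ltwo}$ with $\CK$ as in~\eqref{eq: quasi-optimality}; dividing by $\|\grad \en\|_{\Ltwo}$ and taking the infimum over $w\in\wn$ closes the proof. The main obstacle is exactly this asymmetric exploitation of monotonicity: the ordinary two-sided Lipschitz estimate only produces $\CK\sim 2+C$, whereas the one-sided use of~\eqref{eq:gmono1} absorbs the harmful $\|\grad \en\|^2_{\Ltwo}$ cross-term and recovers the tight constant $1+C$.
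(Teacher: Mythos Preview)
Your proof is correct and follows essentially the same route as the paper: Galerkin orthogonality is combined with the decomposition $w-U=(w-u)+\en$, the monotonicity~\eqref{eq:gmono1} is used to discard the nonnegative cross-term $b(u;\en)-b(U;\en)$, and the remaining piece $b(u;u-w)-b(U;u-w)$ is estimated via~\eqref{eq:b2} after invoking the stability bounds~\eqref{eq:ustab} and~\eqref{eq:Ustab}. Your additional remark contrasting this with the naive triangle-inequality approach (which would only yield a constant $2+C$) is a helpful observation not spelled out in the paper.
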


\begin{proof} 
We first observe the Galerkin orthogonality 
    \begin{equation*}
        a(\en,v) + b(u;v)-b(U;v) =0\qquad\forall v\in\wn.
    \end{equation*}
For any $w\in \wn$, observing that $v=w-U\in\wn$, this yields
    \begin{align*}
        \|\grad \en\|^2_{\Ltwo} 
        &= a(\en, u-w) + a(\en, w-U) 
        = a(\en, u-w) -b(u;w-U) + b(U;w-U).
    \end{align*}
Moreover, making use of~{\eqref{eq:gmono1}}, results in
\begin{align*}
\|\grad \en\|^2_{\Ltwo}
        &= a(\en,u-w)-b(u;u-U)+b(U;u-U) - b(u;w-u) + b(U;w-u)\\
        &\le a(\en,u-w) - b(u;w-u) + b(U;w-u).
    \end{align*}
Applying the Cauchy-Schwarz inequality we have that
    \begin{equation}\label{eq:aux1}
        |a(\en, u-w)|\leq \|\grad \en\|_{\Ltwo} \|\grad(u-w)\|_{\Ltwo}.
    \end{equation}
Furthermore, recalling~\eqref{eq:ustab} and~\eqref{eq:Ustab}, we notice that 
\[
\max\{\|\nabla u\|_{\Ltwo},\|\nabla U\|_{\Ltwo}\}\le \CS(p^*)\|f\|_{\spcL^p(\Omega)}=\rho.
\]
{Employing~\eqref{eq:b2}, we thus derive} the local Lipschitz bound
    \begin{equation}\label{eq:aux2}
        |b(u;w-u)-b(U;w-u)| 
        \le
        \CT^{\nicefrac{1}{p}}\CS(2p^*)^2\vartheta(g_u,\nicefrac{\mu(\rho)}{p})\|\grad\en\|_{\Ltwo}\|\grad(u-w)\|_{\Ltwo}.
    \end{equation}
    Since $w\in\wn$ is arbitrary, from~\eqref{eq:aux1} and~\eqref{eq:aux2}, we immediately obtain the estimate~\eqref{eq: quasi-optimality}.
\end{proof}

\subsection{Finite element approximations on graded meshes}
\label{sc:fes}

The approximation of functions in the weighted Sobolev space $\htto$ within a finite element setting mandates the use of suitably refined meshes that are able to properly resolve possible elliptic corner singularities. To this end, we recall the graded meshes introduced in~\cite{Babuska1979}.

    \begin{definition}[Finite spaces on graded meshes] \label{def:triangulation type}
        Let $\bm{\mathsf{\beta}}=(\beta_1,\ldots,\beta_m)\in[0,1)^m$ be a weight vector associated with the corners $\mat c_1,\ldots,\mat c_m$ of the polygon~$\Omega$, and $\Pb$ the corresponding weight function from \eqref{def:Pb}. Then, a {regular (conforming) and shape-regular triangulation} $\T=\{T\}_{T\in\T}$ of mesh size $h=\max_{T\in\T}h_T$, where $h_T$ denotes the diameter of any triangle $T\in\T$, is called a \emph{graded mesh} if there exists a constant $\kappa\ge 1$ such that, for all $T\in\T$, it holds
    \begin{align*}
            \kappa^{-1}\sup_T\Pb \leq \nicefrac{h_T}{h} \leq \kappa \inf_T\Pb,&\qquad\text{if $\Pb>0$ on $\overline{T}$},
            \intertext{and}
            \kappa^{-1} \leq \frac{h_T}{h \sup_{ T}\Pb}\leq \kappa,&\qquad\text{if there is a corner $\mat c_i$ of $\Omega$ with $\mat c_i\in\overline{T}$.}
    \end{align*}
    Furthermore, for a given triangulation $\T$, we define the associated $\mathbb{P}_1$-finite element space by
    \begin{equation}\label{eq:fes}
            \mathbb{S}^{1}_{\set{D}}(\Omega,\T) := \big\{u\in \hod \; : \; u|_{T} \in \mathbb{P}_1(T)\quad \forall T \in \T\big\},
        \end{equation}
        where $\mathbb{P}_1(T)$ denotes the set of all linear polynomials on a triangle $T\in\T$; this space consists of all continuous, element-wise linear functions on the graded mesh $\T$, with zero boundary values along~$\GD$.
    \end{definition}

The {finite-dimensional} spaces $\mathbb{S}^{1}_{\set{D}}(\Omega,\T)$, which are based on graded mesh families, are able to approximate functions in $\htto$ at an optimal rate, i.e., qualitatively comparable to the approximation of $\spcH^2$- functions on uniform meshes. More precisely, referring to~\cite[Lem.~4.5]{Babuska1979}, we have the interpolation bound
        \begin{equation*}
            \|\grad(u-I_hu)\|_{\Ltwo}\leq C h \|u\|_{\htt(\Omega)},
        \end{equation*}
where $I_h:\,\hod\to\mathbb{S}^{1}_{\set{D}}(\Omega,\T)$ is the standard nodal interpolant on~$\T$, {and $C>0$ depends on~$\Omega$, and on the triangulation parameters $\beta$ and $\kappa$ only}. Moreover, it can be seen that
\[
        N:=\dim\mathbb{S}^{1}_{\set{D}}(\Omega,\T)\lesssim h^{-2},
\]
see~\cite[Lem.~4.1]{Babuska1979}, which means that the corner refinements applied in graded meshes are sufficiently local as to preserve the order of the number of degrees of freedom occurring in uniform meshes. In particular, recalling the quasi-optimality bound~\eqref{eq: quasi-optimality} (with $p=p^*=2$) together with the regularity estimate~\eqref{eq:regstab}, we obtain the following convergence result.

\begin{theorem}[Optimal convergence of FEM]
\label{thm:main1}
Suppose that the nonlinearity $g$ from~\eqref{Pb:main PDE} satisfies the subcritical growth condition~\eqref{eq:gsub} as well as the monotonicity conditions~\eqref{g monotonicity}, and $f\in\Ltwo$. Then, on the finite element space $\wn:=\mathbb{S}^{1}_{\set{D}}(\Omega,\T)$ from~\eqref{eq:fes}, where $\T$ is a graded mesh of mesh size $h>0$ (cf.~Def.~\ref{def:triangulation type} above), the Galerkin approximation $U\in\wn$ from~\eqref{Galerkin wn} satisfies the error bound
        \begin{equation}\label{eq:qo estimate nodes}
            \|\grad(u-U)\|_{\Ltwo} \leq C N^{-\nicefrac{1}{2}} \left(1+\vartheta(g_u,\nicefrac{\mu(\rho)}{2})\right)\left(\|f\|_{\Ltwo}+
            \vartheta(g,\nicefrac{\mu(\rho)}{2})\right),
        \end{equation}
        for a constant $C>0$ {depending on $\Omega$, and on the triangulation parameters $\beta$ and $\kappa$, but} independent of $N:=\dim\mathbb{S}^{1}_{\set{D}}(\Omega,\T)$, and $\rho=\CS(2)\|f\|_{\spcL^2(\Omega)}$. 
\end{theorem}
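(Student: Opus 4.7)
The proof is a direct chaining of the three preceding ingredients: the quasi-optimality bound \eqref{eq: quasi-optimality} (specialised to the Hilbert-space exponent $p=p^*=2$), the nodal interpolation estimate on graded meshes recalled right before the theorem statement, and the weighted regularity estimate \eqref{eq:regstab}. The plan is as follows.

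First, I would take $p=p^*=2$ in the quasi-optimality bound and, since $\wn=\mathbb{S}^1_{\set{D}}(\Omega,\T)\subset\hod$, choose $w:=I_h u\in\wn$, where $I_h$ is the standard nodal interpolant on $\T$. This immediately gives
\[
\|\grad(u-U)\|_{\Ltwo}\le \CK\,\|\grad(u-I_h u)\|_{\Ltwo},\qquad
\CK=1+\CT^{\nicefrac{1}{2}}\CS(4)^2\vartheta(g_u,\nicefrac{\mu(\rho)}{2}).
\]
Here I need to check that the choice $\rho=\CS(2)\|f\|_{\spcL^2(\Omega)}$ is consistent with the value of $\rho$ implicit in \eqref{eq: quasi-optimality}; this is precisely the stability bound \eqref{eq:Ustab} from the previous subsection, so $\max\{\|\nabla u\|_{\Ltwo},\|\nabla U\|_{\Ltwo}\}\le\rho$ as required.

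Next, I would apply the interpolation estimate $\|\grad(u-I_h u)\|_{\Ltwo}\le C h\,\|u\|_{\htto}$ from \cite{Babuska1979}, which is valid because the hypotheses \eqref{g monotonicity}, \eqref{eq:gsub}, and $f\in\Ltwo$ ensure via Thm.~\ref{thm:reg} that $u\in\htto$ for any $\beta$ satisfying~\eqref{eq:beta LB}. Plugging in the regularity bound~\eqref{eq:regstab} then yields
\[
\|\grad(u-U)\|_{\Ltwo}\le C\CK\,h\,\bigl(\|f\|_{\Ltwo}+\vartheta(g,\nicefrac{\mu(\rho)}{2})\bigr).
\]

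Finally, I would convert the mesh-size factor into a degrees-of-freedom factor using $N\lesssim h^{-2}$ from \cite[Lem.~4.1]{Babuska1979}, which gives $h\le C N^{-1/2}$. Combining this with the explicit form of $\CK$ and bounding $\CK\le C(1+\vartheta(g_u,\nicefrac{\mu(\rho)}{2}))$ delivers~\eqref{eq:qo estimate nodes}. All constants depend only on $\Omega$, on the grading parameters $\bm{\beta}$ and $\kappa$, and on the embedding constants $\CT,\CS$, as required.

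Since every ingredient is already assembled in the paper, there is no genuine obstacle; the only minor care points are (i) verifying that the admissible range of $\bm{\beta}$ in \eqref{eq:beta LB} is non-empty for any non-degenerate polygon so that the weighted spaces are usable, and (ii) keeping track of the two distinct $\vartheta$-factors (one coming from $\CK$ via $g_u$, the other from~\eqref{eq:regstab} via $g$), which is exactly what produces the product structure on the right-hand side of~\eqref{eq:qo estimate nodes}.
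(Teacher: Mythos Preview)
Your proposal is correct and matches the paper's own reasoning: the paper does not give a separate proof environment but states, just before the theorem, that the result follows by ``recalling the quasi-optimality bound~\eqref{eq: quasi-optimality} (with $p=p^*=2$) together with the regularity estimate~\eqref{eq:regstab}'', combined with the interpolation bound and the dimension estimate $N\lesssim h^{-2}$ recalled in the preceding paragraph. Your write-up simply makes this chaining explicit, with the same choice of exponents and the same identification of $\rho=\CS(2)\|f\|_{\spcL^2(\Omega)}$.
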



\subsection{Iterative linearized finite element solution}\label{sc:ILG}

The operator $\opT$ from~\eqref{eq:Tweak} motivates a fixed point iteration 
    \begin{equation}\label{eq:fp}
    U_{n+1}=\opTN(U_n),\qquad n\ge 0,
    \end{equation}
    where $\opTN$ is the Galerkin discretization of the operator $\opT$ in a suitable finite dimensional linear subspace~$\wn$, which, for $u\in\hod$, is given by
    \[
    a(\opT(u)-\opTN(u),v)=0\qquad\forall v\in\wn.
    \]
    We can thereby approximate the nonlinear discrete formulation~\eqref{Galerkin wn} by an iterative process that was previously proposed in \cite[Eq.~(12)]{Heid2018} for strongly monotone and uniformly Lipschitz continuous problems: Specifically, for an (arbitrary) initial guess $U_0\in\wn$, the iteration~\eqref{eq:fp} generates a sequence $\{U_n\}_{n\ge0} \subset \wn$ through the following scheme:
    \begin{equation}\label{Zarantonello}
  U_{n+1}\in\wn:\qquad  a(U_{n+1}, v) = (1-\alpha)a(U_n,v) + \alpha\left(\into fv \;\dx - b(U_n;v)\right)\qquad\forall v\in\wn,
    \end{equation}
    for all $n\geq 0$ and all $v \in \hod$, cf.~\eqref{eq:Tweak}. Here, $\alpha$ is a fixed parameter satisfying $0 < \alpha \leq 1$. 
    
For the case where $\wn$ are chosen to be finite element spaces based on graded meshes, see Sec.~\ref{sc:fes}, we aim to prove that the resulting iterative approximations converge to the exact solution at an optimal rate. To this end, we first observe the following result.

    \begin{proposition}\label{prop:contraction}
        Let $\wn\subset\hod$ be a closed linear subspace. Then, for $\alpha\in(0,1]$ sufficiently small and for any initial guess $U_0\in\wn$, the iteration~\eqref{Zarantonello} converges to the (unique) solution $U\in\wn$ of~\eqref{Galerkin wn} as~$n\to\infty$. Furthermore, there is a constant $0\le r_\alpha<1$ such that the a priori error bound
        \begin{equation}\label{eq:apriori}
        \|\grad(U-U_n)\|_{\Ltwo}\le r^n_\alpha\|\grad(U-U_0)\|_{\Ltwo}\qquad \forall n\ge 0,
        \end{equation}
        holds true.
    \end{proposition}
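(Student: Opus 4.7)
The plan is to recognise the iteration~\eqref{Zarantonello} as the natural fixed-point iteration for the Galerkin operator $\opTN$ acting on the closed (finite-dimensional) subspace $\wn\subset\hod$, and to reuse the self-mapping and contraction analysis already carried out in the proof of Thm.~\ref{thm:existence} applied with $\mathbb{W}:=\wn$.

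First, I would enlarge the radius $\rho$ from Thm.~\ref{thm:existence} so that it captures both the (unique) Galerkin solution $U\in\wn$ of~\eqref{Galerkin wn} and the user-supplied initial guess $U_0$; thanks to~\eqref{eq:Ustab}, the concrete choice
\[
\rho>\max\{\CS(p^*)\|f\|_{\spcL^p(\Omega)},\,\|\grad U_0\|_{\Ltwo}\}
\]
satisfies~\eqref{eq:r} and places both $U$ and $U_0$ inside $\set{B}_\rho\cap\wn$. Applying Thm.~\ref{thm:existence} in the subspace $\wn$, together with the contraction estimate~\eqref{eq:Tcontract} established in its proof, then furnishes an $\alpha_0\in(0,1]$ (depending on $\rho$, and hence implicitly on $U_0$) such that, for every $\alpha\in(0,\alpha_0]$, the operator $\opTN$ maps $\set{B}_\rho\cap\wn$ into itself and is a contraction there with rate $r_\alpha:=\sqrt{\lambda(\alpha)}<1$; the unique fixed point is precisely the Galerkin solution $U$.

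The remainder is standard Banach iteration bookkeeping. Since $U_0\in\set{B}_\rho\cap\wn$ by construction, an induction based on the self-mapping property confines the entire orbit $\{U_n\}_{n\ge 0}$ produced by~\eqref{Zarantonello} to $\set{B}_\rho\cap\wn$; the contraction estimate applied with the pair $(U,U_{n-1})$ then gives $\|\grad(U-U_n)\|_{\Ltwo}\le r_\alpha\|\grad(U-U_{n-1})\|_{\Ltwo}$, which cascades by induction into~\eqref{eq:apriori}, and convergence $U_n\to U$ follows immediately since $r_\alpha<1$.

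No serious obstacle arises once Thm.~\ref{thm:existence} is in hand. The only subtlety worth flagging is that the admissible damping threshold $\alpha_0$ and the resulting rate $r_\alpha$ both depend on the enlarged radius $\rho$ through the subcritical growth quantity $\vartheta(g_u,\nicefrac{\mu(\rho)}{p})$, so neither constant is uniform in the choice of initial guess. This non-uniformity is a genuine feature of the local Lipschitz framework built on the Trudinger inequality, which only supplies Lipschitz constants on bounded subsets of~$\hod$; a globally uniform rate would require either a globally Lipschitz nonlinearity or an a priori bound on $\|\grad U_0\|_{\Ltwo}$ in terms of problem data.
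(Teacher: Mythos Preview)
Your proposal is correct and follows essentially the same route as the paper: you apply Thm.~\ref{thm:existence} with $\mathbb{W}=\wn$ and the enlarged radius $\rho>\max\{\CS(p^*)\|f\|_{\spcL^p(\Omega)},\|\grad U_0\|_{\Ltwo}\}$, identify $r_\alpha=\sqrt{\lambda(\alpha)}$ from~\eqref{eq:Tcontract}, and then invoke the standard a priori bound for contractive fixed-point iterations. Your additional remark on the dependence of $\alpha_0$ and $r_\alpha$ on $U_0$ through $\rho$ is a valid observation not made explicit in the paper.
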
 
    
    \begin{proof}
    {Letting $\mathbb{W}=\mathbb{W}_N$ in Thm.~\ref{thm:existence} and} $\rho>\max\left\{\|\grad U_0\|_{\Ltwo},\CS(p^*)\|f\|_{\spcL^p(\Omega)}\right\}$, cf.~\eqref{eq:r}, the operator $\opTN$ above is a self-mapping contraction on the ball $\wn\cap\set{B}_\rho$ (for $\alpha>0$ sufficiently small), and the fixed-point iteration~\eqref{eq:fp} converges to some $U\in\wn\cap\set{B}_\rho$, which, in turn, is the unique solution of~\eqref{Galerkin wn}. The  estimate~\eqref{eq:apriori} is the classical a priori bound for contractive fixed point iterations (see, e.g., \cite[Thm.~3.7-1]{Ciarlet:2013}) with the contraction constant $r_\alpha=\lambda(\alpha)^{\nicefrac12}$ from~\eqref{eq:Tcontract}.
    \end{proof}

We are now ready to present the main result of this work which shows that the finite element approximations  generated by $n=\mathcal{O}(\log(\dim\mathbb{S}^{1}_{\set{D}}(\Omega,\T)))$ iterations (on graded meshes~$\T$), yields optimally converging discrete solutions of~\eqref{weak formulation}.

\begin{theorem}[Optimal convergence of iterative linearized FEM]\label{thm:fullconv}
Suppose that the assumptions of Thm.~\ref{thm:main1} hold, and let $\alpha>0$ be a sufficiently small parameter. Then, there is a constant $\gamma>0$ (depending on the contraction constant $r_\alpha$ from~\eqref{eq:apriori}) such that, for any initial guess $U_0\in\mathbb{S}^{1}_{\set{D}}(\Omega,\T)$, where $\{\T\}_{h>0}$ is the graded mesh family from Sec.~\ref{sc:fes}, performing $n\ge\gamma\log N$ iterations of~\eqref{Zarantonello} on $\mathbb{S}^{1}_{\set{D}}(\Omega,\T)$, it holds the a priori error bound
\[
        \|\grad(u-U_n)\|_{\Ltwo}\le CN^{-\nicefrac12},
\]
where $u\in\hod$ is the exact solution of \eqref{weak formulation}, and $N=\dim\mathbb{S}^{1}_{\set{D}}(\Omega,\T)$; the constant $C>0$ {depends on $\Omega$, and on the triangulation parameters $\beta$ and $\kappa$, but is} independent of $N$.
\end{theorem}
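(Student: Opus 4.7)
The strategy is to split the total error via the triangle inequality as $u-U_n = (u-U) + (U-U_n)$, where $U\in\mathbb{S}^{1}_{\set{D}}(\Omega,\T)$ denotes the Galerkin solution of~\eqref{Galerkin wn}, and then to control each piece separately. The discretization part is handled directly by Thm.~\ref{thm:main1}, which gives $\|\grad(u-U)\|_{\Ltwo}\le C_1 N^{-\nicefrac12}$ with $C_1$ depending only on the data and on the triangulation parameters, but independent of~$N$.

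For the iteration part, I would apply Prop.~\ref{prop:contraction} to the closed subspace $\mathbb{W}_N:=\mathbb{S}^{1}_{\set{D}}(\Omega,\T)$ to obtain a contraction factor $r_\alpha\in[0,1)$ satisfying $\|\grad(U-U_n)\|_{\Ltwo}\le r_\alpha^n\|\grad(U-U_0)\|_{\Ltwo}$ for every $n\ge 0$. The starting discrepancy is controlled uniformly in $N$ via the triangle inequality and the stability bound~\eqref{eq:Ustab}:
\[
\|\grad(U-U_0)\|_{\Ltwo}\le \|\grad U\|_{\Ltwo}+\|\grad U_0\|_{\Ltwo}\le \CS(2)\|f\|_{\Ltwo}+\|\grad U_0\|_{\Ltwo}=:M_0.
\]
To match the two error sources in order of magnitude, I would enforce $r_\alpha^n\le N^{-\nicefrac12}$, which amounts to the logarithmic requirement $n\ge\gamma\log N$ with $\gamma:=(2|\log r_\alpha|)^{-1}$. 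Combining the two contributions then produces the asserted bound with $C:=C_1+M_0$.

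The one genuine obstacle, and the reason the statement is more than a textbook corollary, is that the contraction factor $r_\alpha$ (and the admissible threshold on $\alpha$) must be chosen independently of the discrete subspace, so that $\gamma$ does not deteriorate as $h\to 0$. This can be verified from the explicit form of $\lambda(\alpha)$ in~\eqref{eq:Tcontract}: it depends only on $\alpha$, on the subcritical-growth quantity $\vartheta(g_u,\nicefrac{\mu(\rho)}{p})$, and on a radius~$\rho$, all of which are subspace-free provided one fixes a single $\rho>\max\{\CS(p^*)\|f\|_{\spcL^p(\Omega)},\|\grad U_0\|_{\Ltwo}\}$, since the key estimates of Lem.~\ref{lem:bbounds} never reference $\mathbb{W}_N$. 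Consequently, a single $r_\alpha<1$, and hence a single $\gamma$, serves the entire graded-mesh family $\{\mathbb{S}^{1}_{\set{D}}(\Omega,\T)\}_{h>0}$, which closes the argument.
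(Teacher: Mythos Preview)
Your proof is correct and follows exactly the paper's approach: split via the triangle inequality, invoke Thm.~\ref{thm:main1} for the discretization error and Prop.~\ref{prop:contraction} for the iteration error, and choose $\gamma=(2|\log r_\alpha|)^{-1}$ so that $r_\alpha^n\le N^{-\nicefrac12}$. You are in fact slightly more careful than the paper in explicitly bounding $\|\grad(U-U_0)\|_{\Ltwo}$ via~\eqref{eq:Ustab} and in arguing the $N$-independence of $r_\alpha$ from the explicit form of $\lambda(\alpha)$ in~\eqref{eq:Tcontract}; the paper simply absorbs both points into the constant~$C$ without comment.
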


\begin{proof}
Applying the triangle inequality, and using the bounds~\eqref{eq:qo estimate nodes} and~\eqref{eq:apriori}, implies
\begin{align*}
        \|\grad(u-U_n)\|_{\Ltwo} \leq \|\grad(u-U)\|_{\Ltwo} + \|\grad(U-U_n)\|_{\Ltwo}\leq C\left(N^{-\nicefrac12}+r^n_\alpha\right).
\end{align*}
Then, for 
$
n\ge\nicefrac{\log N}{2|\log(r_\alpha)|},
$
we obtain $r^n_\alpha\le N^{-\nicefrac12}$,
which completes the argument.
\end{proof}


\section{Numerical experiments}
\label{sc:numerics}

In this section, we conduct several numerical experiments to showcase the optimal convergence rate (OCR) of the proposed iterative linearized Galerkin approach for semilinear elliptic boundary value problems~\eqref{Pb:main PDE} with (SCG)-nonlinearities, cf.~\eqref{eq:gsub}. In particular, these experiments aim to validate our theoretical results and to illustrate the performance of graded meshes to resolve possible corner singularities in polygons.
%
We employ the Picard iteration \eqref{Zarantonello} with the initial guess $U_0\equiv 0$ being the zero function in $\hod$ and with a sufficiently small parameter $\alpha \in (0,1]$, cf.~Prop.~\ref{prop:contraction}. Throughout, we consider the L-shaped domain
\begin{equation}\label{eq:Lshaped}
\Omega=(-1,1)^2\backslash([-1,0]\times[0,1]),
\end{equation}
with a re-entrant corner at the origin. {We use a uniform initial mesh consisting of 12 triangles and 3 interior mesh nodes, and then apply successive red-green-blue (RGB) refinements on all (or a suitable subset of all) triangles to generate subdivisions that are uniform (or graded towards the re-entrant corner, respectively).} In order to compute an approximate finite element solution~$U_n$ in line with Thm.~\ref{thm:fullconv}, for a suitable value $\gamma(\alpha)\in \N$, we impose a maximal number of iterations for \eqref{Zarantonello} given by
    \begin{equation}\label{eq:gamman}
    \Tilde{n}=\gamma\lceil \log(N) \rceil.
    \end{equation}
Any integration on triangles is performed using the standard 3-point quadrature rule at the midpoints of the edges.

\subsection{Experiment 1: Exponential nonlinearity and uniform meshes}
    In this first example, we aim to test the attainability of the OCR for the exponential nonlinearity $g(u)=e^u$. Specifically, we solve the Dirichlet problem on the L-shaped domain $\Omega$ from~\eqref{eq:Lshaped}:
\begin{alignat*}{2}
            -\Delta u + e^u &= f&\qquad&  \text{in } \Omega\\
            u&=0&& \text{on } \partial\Omega.
    \end{alignat*}
    Here, we choose the right-hand side $f$ so that the exact solution is given by the smooth function $u(x,y)=\sin(\pi x) \sin(\pi y)$.
    We use a sequence of uniform meshes corresponding to $\beta=0$ in Def.~\ref{def:triangulation type}; in our plots, the associated finite element spaces are characterized by their numbers of dofs, denoted by $N$. Our focus in this experiment is on the iterative numerical scheme applied to the above problem. 
    
    Fig.~\ref{Fig:Pb1 uniform} illustrates the global error between the exact solution $u$ and the iterative solution $U_n$ against the number of degrees of freedom (dofs) $N$ for each uniform mesh. The iterative scheme \eqref{Zarantonello} is applied to each mesh until the (logarithmic) error slope between two consecutive meshes is less than $-0.49$, or the maximal number of iterations according to~\eqref{eq:gamman}
    is reached; we choose $\gamma=4$ to obtain the desired convergence rate for all the selected values of~$\alpha$, although our numerical results indicate that $\gamma=1$ would indeed suffice for larger values of $\alpha$. In the legend of Fig.~\ref{Fig:Pb1 uniform}, for each choice of~$\alpha$, we display the total number of iterations over all meshes by the variable~`$\mathrm{It}$'. The convergence plot clearly shows that the expected convergence rate of $N^{-\nicefrac{1}{2}}$ is achieved for the iterative linearized Galerkin scheme. Not surprisingly, we observe that larger $\alpha$-values lead to a more efficient performance of the Picard iteration. 

    \begin{figure}[b]
    \begin{center}
        \includegraphics[scale=0.275]{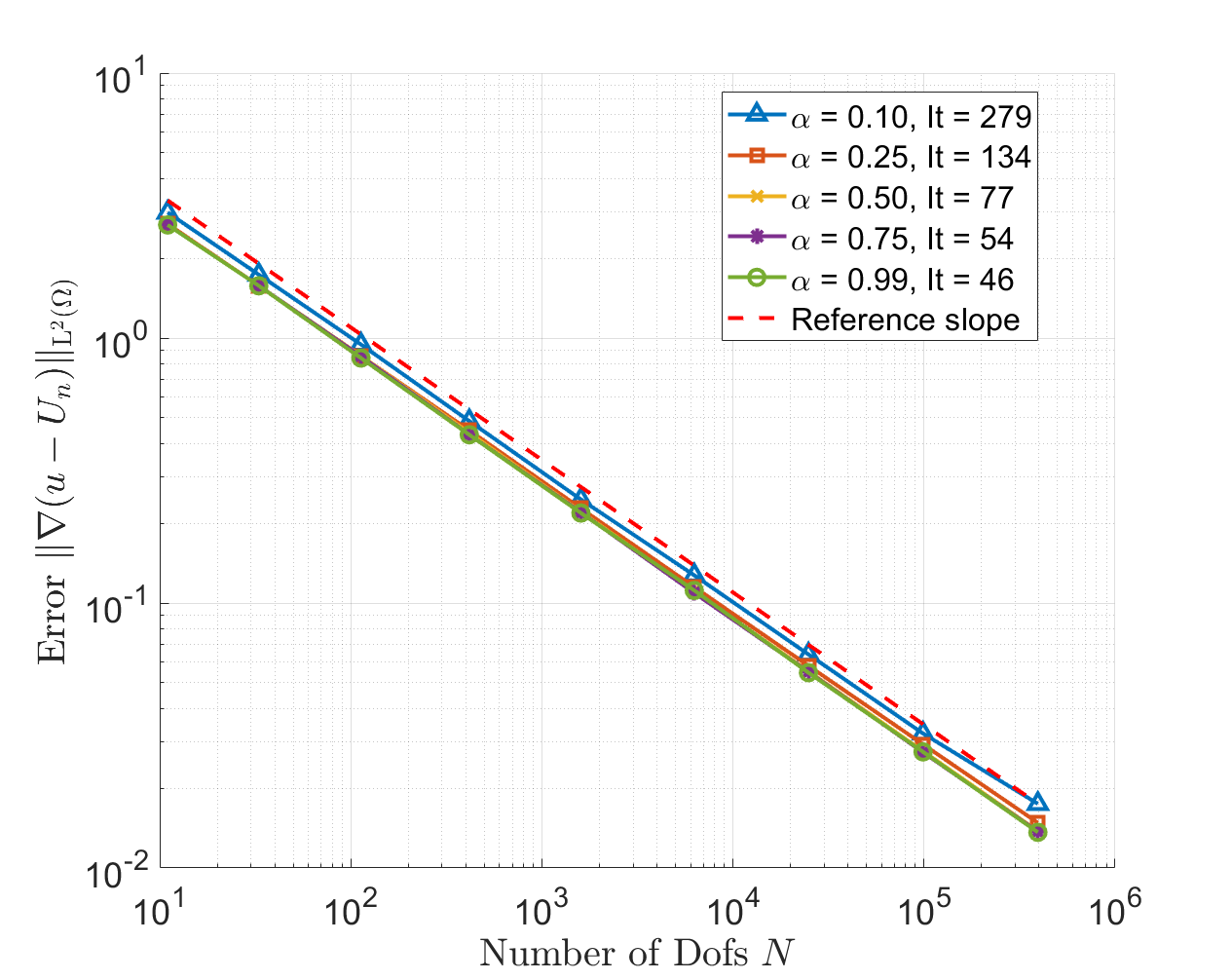}
    \end{center}
    \caption{Example~1: Convergence behavior on uniform meshes for various values of $\alpha$, and using $\gamma=4$.}
    \label{Fig:Pb1 uniform}
    \end{figure}
  
    The sensitivity with respect to the choice of $\alpha$ prompts the question of optimal parameter selection. To determine the best value of~$\alpha$ for the given example, i.e., to minimize the number of iterations to reach an error of $ \|\grad e\|_{\spcL^2(\Omega)}\leq2\cdot 10^{-2}$, we employ a golden section method on the parameter interval $[0,1]$. The plot in Fig.~\ref{Fig:Pb1 optimization} illustrate this optimization process yielding $\alpha_{\text{opt}}\approx 0.8924$ (with 2 iterations);
    here, the optimization is designed towards a minimal number of 
    iterations needed to achieve the prescribed error tolerance on the last mesh (corresponding to approximately $ 3.9\cdot10^5$ degrees of freedom). 
    
    \begin{figure}
         \centering
         \includegraphics[scale=0.6]{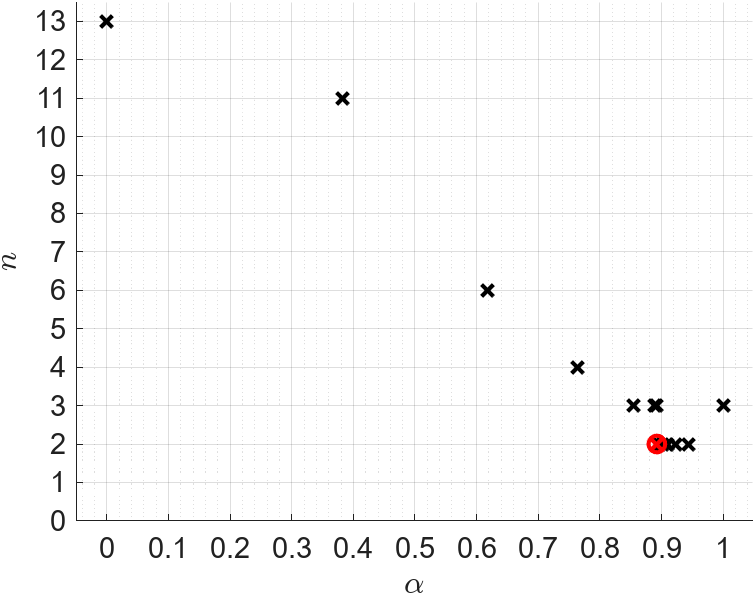}
         \caption{Experiment 1: Golden section optimization for the parameter $\alpha$ in the Picard iteration (with $\gamma=1$). Identification of $\alpha_{\text{opt}}$ (with the optimal value circled).}
         \label{Fig:Pb1 optimization}
    \end{figure}

\subsection{Experiment 2: Cubic nonlinearity on graded meshes}
    In our second experiment, we aim to demonstrate the necessity of utilizing graded meshes for the accurate approximation of a solution with a singularity at the origin in the L-shaped domain $\Omega$ from~\eqref{eq:Lshaped}. We consider the Dirichlet problem 
 \begin{alignat*}{2}
            -\Delta u + u^3 &= f&\qquad&  \text{in } \Omega\\
            u&=0&& \text{on } \partial\Omega,
    \end{alignat*}
    where $g(u)=u^3$ is a cubic monomial nonlinearity. We choose $f$ such that the exact solution is given by
    \begin{equation*}
        u(x,y)=2r^{-\nicefrac{4}{3}}xy(1-x^2)(1-y^2),
    \end{equation*}
    with the radial variable $r=(x^2+y^2)^{\nicefrac12}$, which has a singularity at the origin~$\mat{c}_1=(0,0)$; indeed, it can be verified that $u\in\spcH^2_{\beta}(\Omega)$, for the weight function $\Phi_{\mat\beta}=r^\beta$, with $\beta>\nicefrac13$, which is in line with the pure Dirichlet case in~\eqref{eq:beta LB}. The re-entrant corner singularity necessitates the use of graded mesh refinements towards $(0,0)$, where we use $\beta=0.4$. 
    The corresponding graded meshes are constructed based on appropriate {RGB refinements of triangles close to~$\mat c_1$, with values $h\in\{0.25,0.15,0.08,0.035,0.016,0.008,0.0038,0.0019\}$;} we present two examples in Fig.~\ref{Fig:Pb2 meshes}.

    \begin{figure}
         \centering
        \includegraphics[scale=0.56]{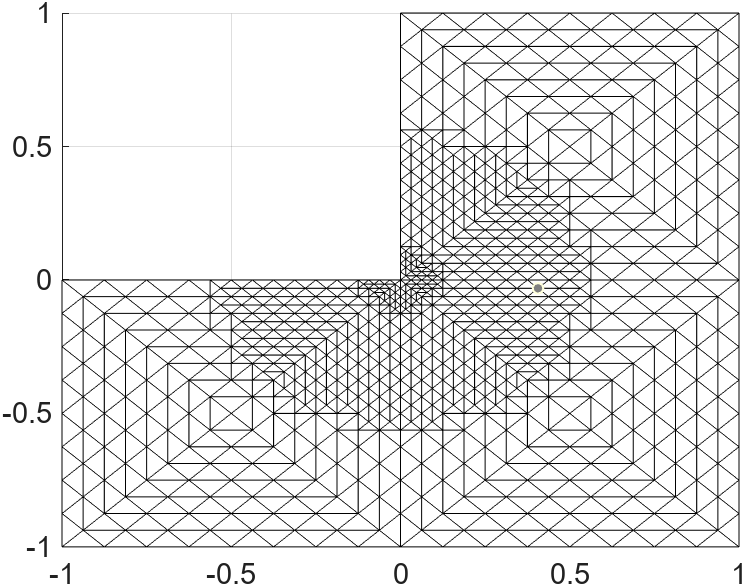}\hfill
         \includegraphics[scale=0.56]{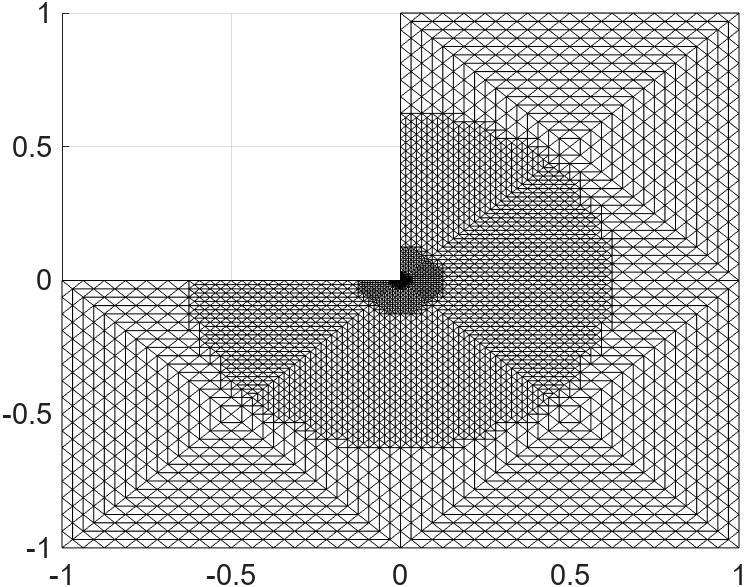}
         \caption{Graded meshes towards the origin in an L-shaped domain with $\beta=0.40$ for mesh size $h=0.035$ (left) and $h=0.016$ (right).}
         \label{Fig:Pb2 meshes}
    \end{figure}
    
    Fig.~\ref{Fig:Pb2 rates} compares the convergence rates on graded (left) and on uniform meshes (right). Whilst the OCR is reached on graded meshes, we clearly see a suboptimal convergence regime for uniform meshes (utilizing the maximal number of iterations at hand for each $\alpha$). This is because uniform meshes do not effectively distribute the available degrees of freedom in the presence of the occurring corner singularity.
    
    \begin{figure}
         \centering
         \includegraphics[scale=0.37]{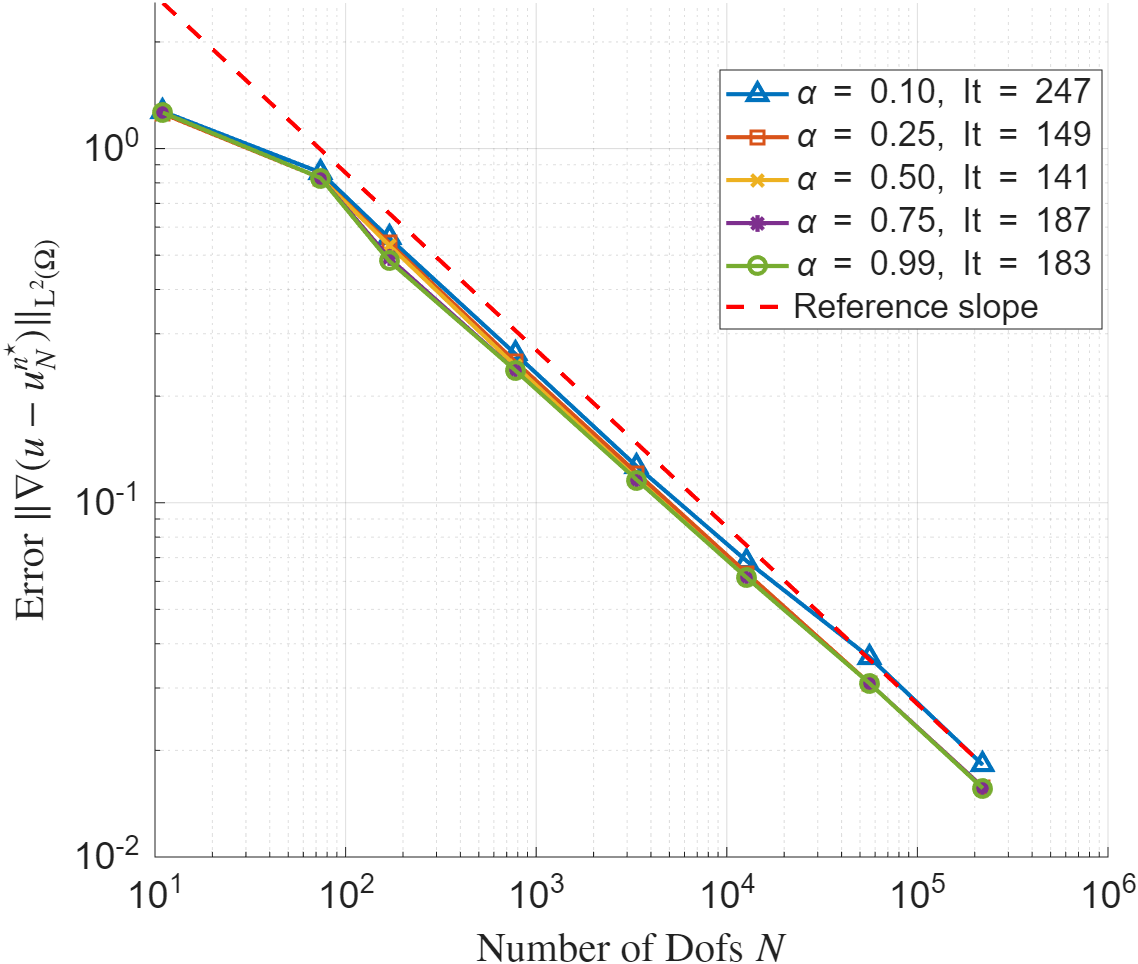}\hfill
         \includegraphics[scale=0.345]{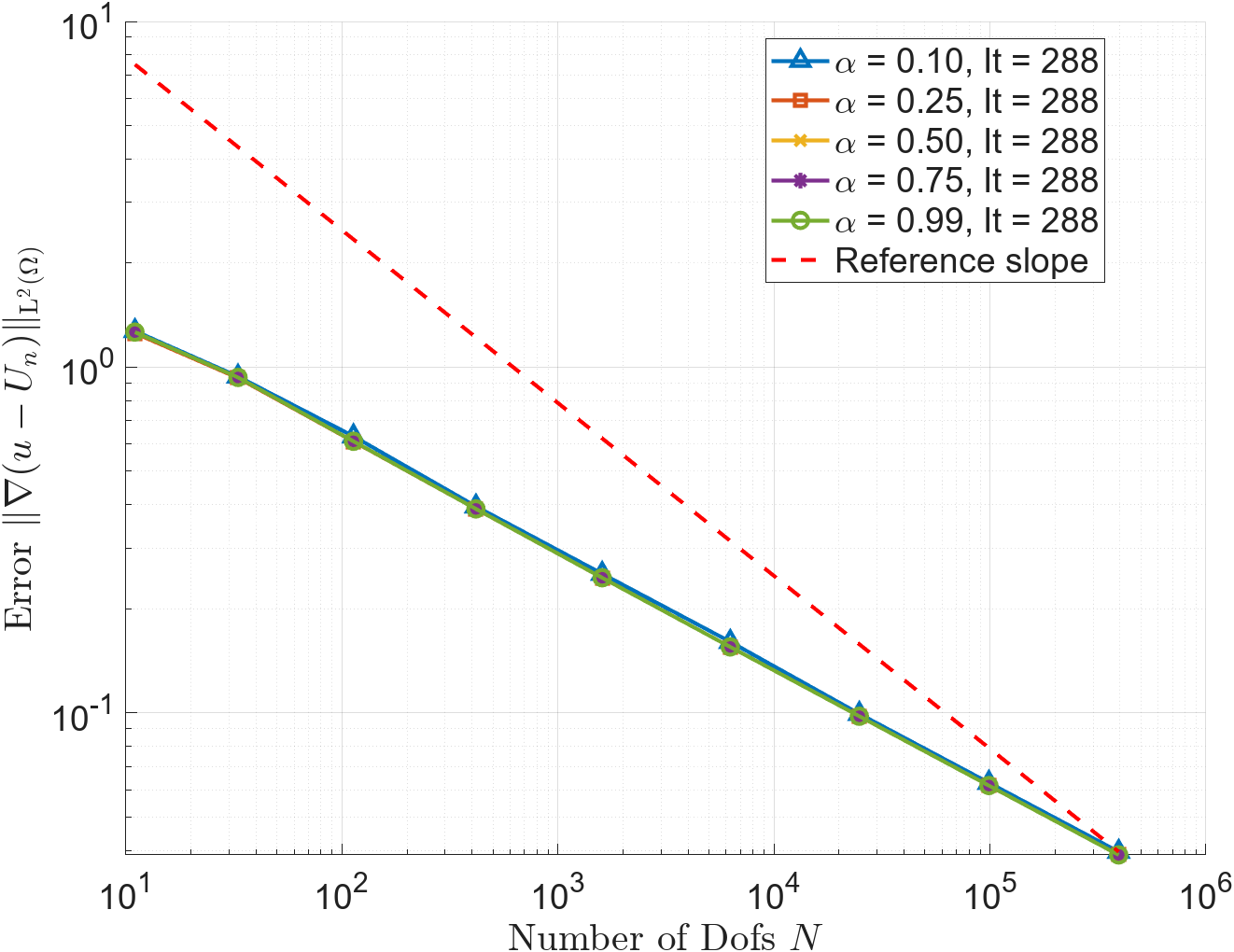}
         \caption{Experiment 2: Computational performance for several values of $\alpha$ and their corresponding total number of iterations used to reach the OCR with $\gamma=4$ on graded meshes (left), and uniform meshes (right); the dashed line represents a reference slope of $-\nicefrac{1}{2}$.}
         \label{Fig:Pb2 rates}
    \end{figure}

    Similarly to the first experiment, we perform an optimization analysis in the parameter $\alpha$. Fig.~\ref{Fig:Pb2 optimization} depicts the golden section method for $\gamma=1$. More precisely, we illustrate the optimization process to reach the prescribed error of $\|\grad e\|_{\spcL^2(\Omega)}\leq 2\cdot 10^{-2}$ on a graded mesh with roughly $3.9\cdot 10^5$ degrees of freedom. We require 2 iterations with a corresponding optimal value of $\alpha_{\text{opt}}\approx 0.9152$, thereby underlining the effectiveness of the graded mesh approach in handling singularities at corners.
    \begin{figure}
         \centering
         \includegraphics[scale=0.6]{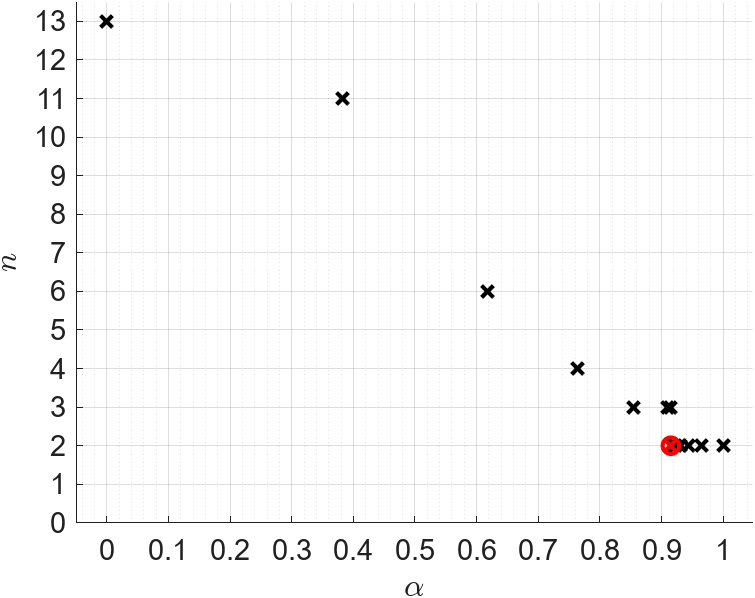}
         
         \caption{Experiment 2: Golden section optimization for the parameter $\alpha$ in the Picard iteration (with $\gamma=1$). Identification of $\alpha_{\text{opt}}$ (with the optimal value circled).}
         \label{Fig:Pb2 optimization}
    \end{figure}

\subsection{Experiment 3: Exponential nonlinearity and mixed boundary data}

    In this final example, we aim to test the effectiveness of graded meshes on a model problem with mixed boundary conditions and a (near) limit case of exponential nonlinearity in the sense of Def.~\ref{eq:SCG}. Specifically, on the L-shaped domain $\Omega$ from~\eqref{eq:Lshaped}, we consider the boundary value problem 
    \begin{alignat*}{2}
            -\Delta u + e^{4|u|^{0.9}u} &= f&\qquad&  \text{in } \Omega \\
            u&=0&& \text{on } \GD \\
            \partial_{\mat n} u&=0&& \text{on } \GN,
    \end{alignat*}
    where $\Gamma_{\set{N}}=\{0\}\times(0,1)$ and $\Gamma_{\set{D}}=\partial\Omega \backslash \overline{\Gamma}_{\set{N}}$. 
    We choose $f$ such that the exact solution is given by
    \begin{equation*}
        u(x,y)=r^{-\nicefrac{2}{3}}y(1-x^2)(1-y^2),
    \end{equation*}
    with a singularity at the corner $\bm{\mathsf{c}}_1=(0,0)$; it holds $u\in\spcH^2_{\beta}(\Omega)$ for the weight function $\Phi_\beta=r^\beta$, with $\beta>\nicefrac23$, which corresponds to the mixed Dirichlet-Neumann case in~\eqref{eq:beta LB}. Accordingly, we propose to use graded meshes based on $\beta=0.7$,
    for reasonable iteration parameters $\alpha=0.5$ and $\gamma=2$, {and with the $h$-values chosen exactly as in Experiment 2.}
    From Fig.~\ref{Fig:Pb3 meshes} we observe that the desired optimal rate is attained quickly, thereby demonstrating the benefit of the adjusted parameter settings in the current situation of mixed boundary conditions.

    \begin{figure}
         \centering
         \includegraphics[scale=0.368]{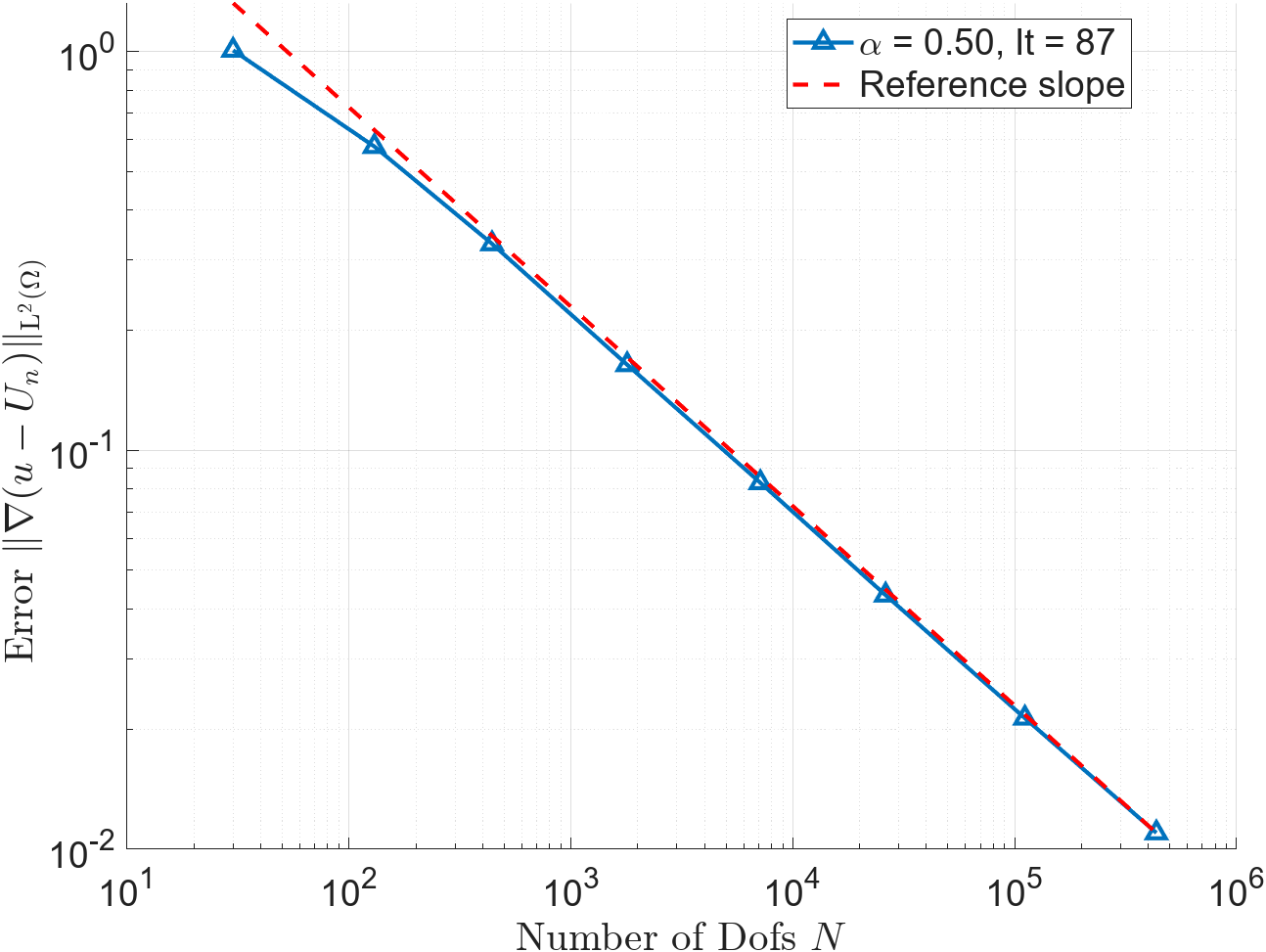}
         \caption{Experiment 3: Convergence behavior for a sequence of graded meshes with $\alpha=0.5$, $\beta=0.7$ and $\gamma=2$.}
         \label{Fig:Pb3 meshes}
    \end{figure}
    
    Furthermore, as in the previous experiments, Fig.~\ref{Fig:Pb3 optimization} depicts a computational analysis of the Picard scheme for various values of the iteration parameter~$\alpha$ to reach the prescribed error of $\|\grad e\|_{\spcL^2(\Omega)}\leq 2\cdot 10^{-2}$  on a mesh of approximately $4.3\cdot10^5$ degrees of freedom; for $\alpha_{\text{opt}}\approx 0.7416$, which results in 3 iterations, we perceive the high efficiency of the graded meshes to handle singularities, even with mixed boundary conditions. However, in contrast to Fig.~\ref{Fig:Pb1 optimization} and \ref{Fig:Pb2 optimization}, we observe {in Fig.~\ref{Fig:Pb3 optimization}} a more unstable behavior in the required number of iterations, as well as a potential divergence of the scheme for larger values of $\alpha$. 
    In fact, this is made even clearer by Fig.~\ref{Fig:Pb3 div iterations}, where we increase the scaling parameter for the number of iterations~$\Tilde{n}$, cf.~\eqref{eq:gamman}, from $\gamma=1$ to $\gamma=10$ to reach the OCR {on the entire mesh sequence}. We recognize three different stages in the behavior with respect to~$\alpha$: For intermediate values up to approximately $\alpha=0.79$, a stable, oscillatory behavior is observed; beyond this range, we enter a small transitional phase where the number of iterations increases considerably; finally, from around $\alpha=0.91$ onward, the Picard scheme does no longer converge within a sensible number of iterations.

    \begin{figure}
         \centering
         \includegraphics[scale=0.6]{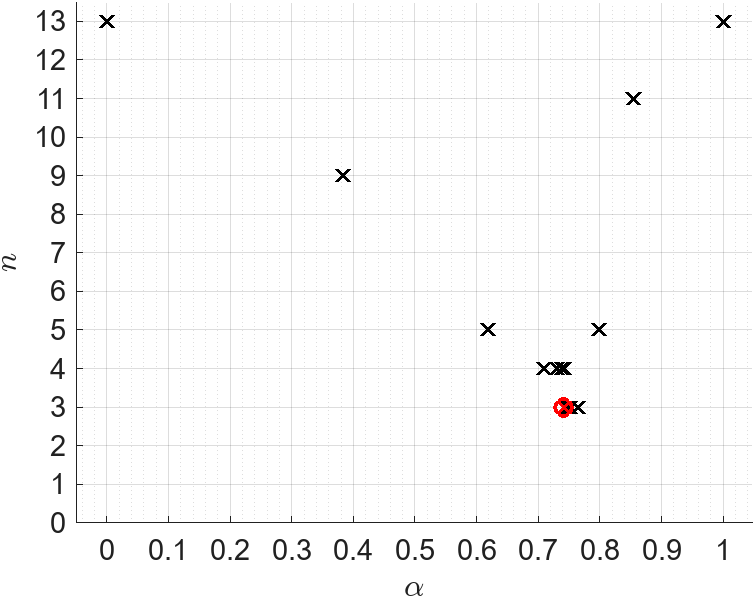}
         \caption{Experiment 3: Golden section optimization for the parameter $\alpha$ in the Picard iteration (with $\gamma=1$). Identification of $\alpha_{\text{opt}}$ (with the optimal value circled).}
         \label{Fig:Pb3 optimization}
    \end{figure}

    \begin{figure}
         \centering
         \includegraphics[scale=0.36]{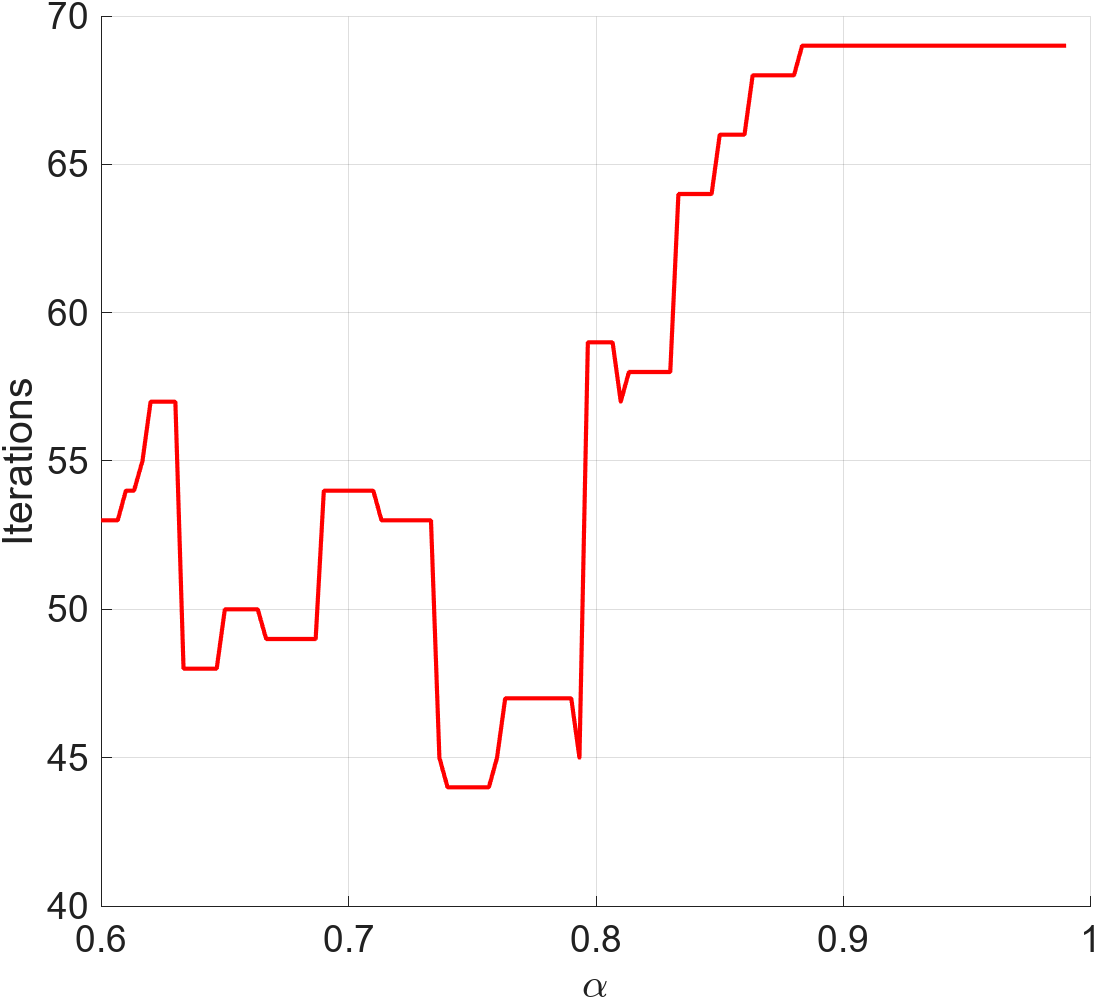}\hfill
         \includegraphics[scale=0.36]{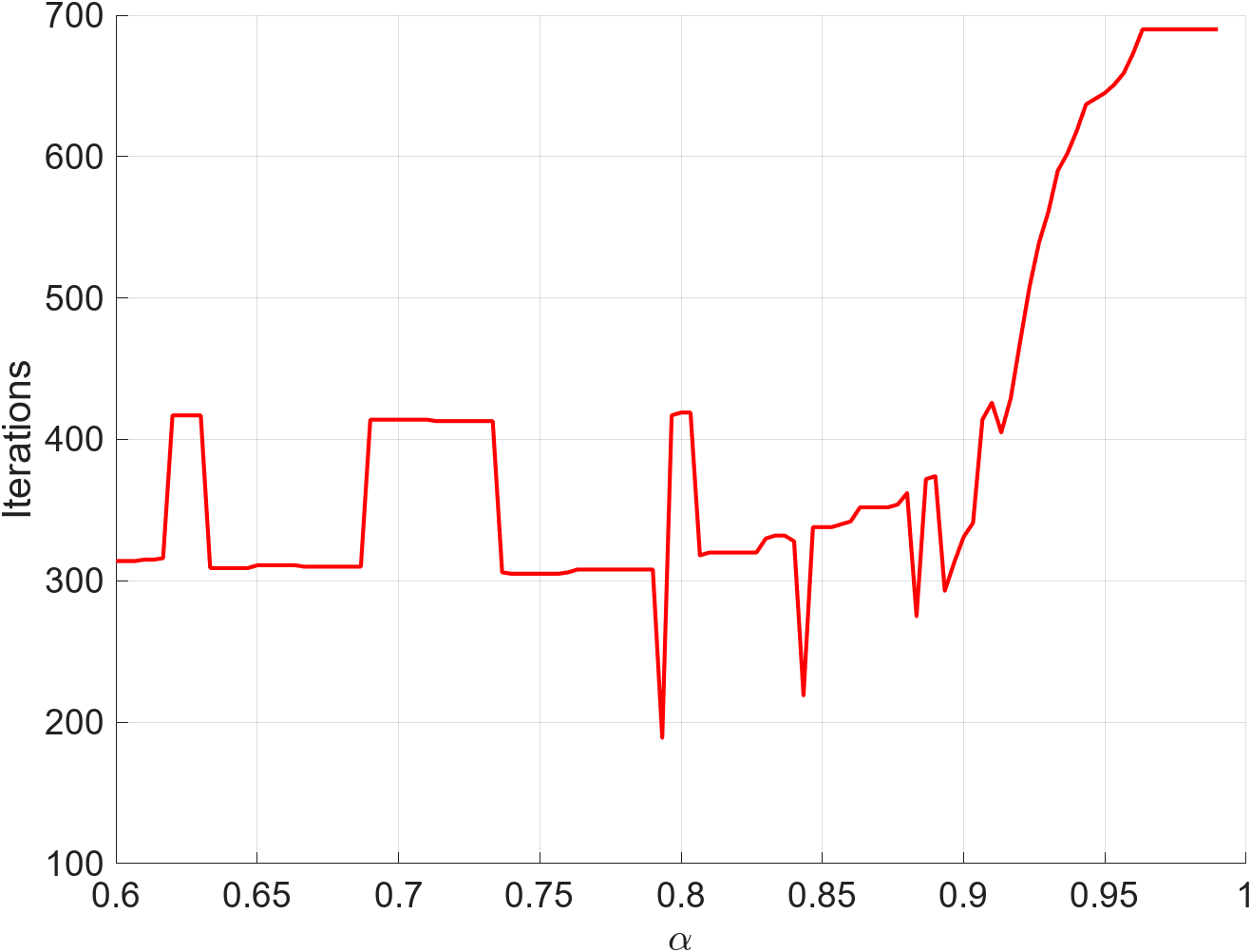}
         \caption{Experiment 3: Numbers of iterations for $\gamma=1$ (left) and $\gamma=10$ (right) needed {across all meshes} in terms of $\alpha\in [0.6,0.99]$ for the nonlinearity $g(u)=\exp(4|u|^{0.9}u)$ and mixed boundary conditions. Distance between two $\alpha$-samples: $\Delta\alpha=\nicefrac{1}{30}$.}
         \label{Fig:Pb3 div iterations}
    \end{figure}
    
    We further investigate the failure of the Picard iteration to converge for values~$\alpha\approx 1$. To this end, we employ two preset maximal iteration limits resulting from $\gamma=1$ (i.e. ${\sum} \Tilde{n}=69$) and $\gamma=10$ (i.e. ${\sum}\Tilde{n}=690$). Fig.~\ref{Fig:Pb3 div stages} shows three distinct behaviors for both $\gamma=1$ and $\gamma=10$: For $\gamma=1$, we see that convergence begins to deteriorate from~$\alpha=0.85$ for larger numbers of degrees of freedom, yet, remains stable for $\gamma=10$. 
    For $\alpha=0.95$, however, neither value of~$\gamma$ is able to retain convergence.
    In conclusion, for higher~$\alpha$-values (uniformly away from a certain upper limit $<1$), OCR can still be retrieved by appropriately increasing the number of iterations. 
    For $\alpha$-values close to~1, however, the iteration seems to diverge completely, thereby indicating that these values may possibly exceed the threshold presented in Prop.~\ref{prop:contraction}.

    \begin{figure}
         \centering
         \includegraphics[scale=0.367]{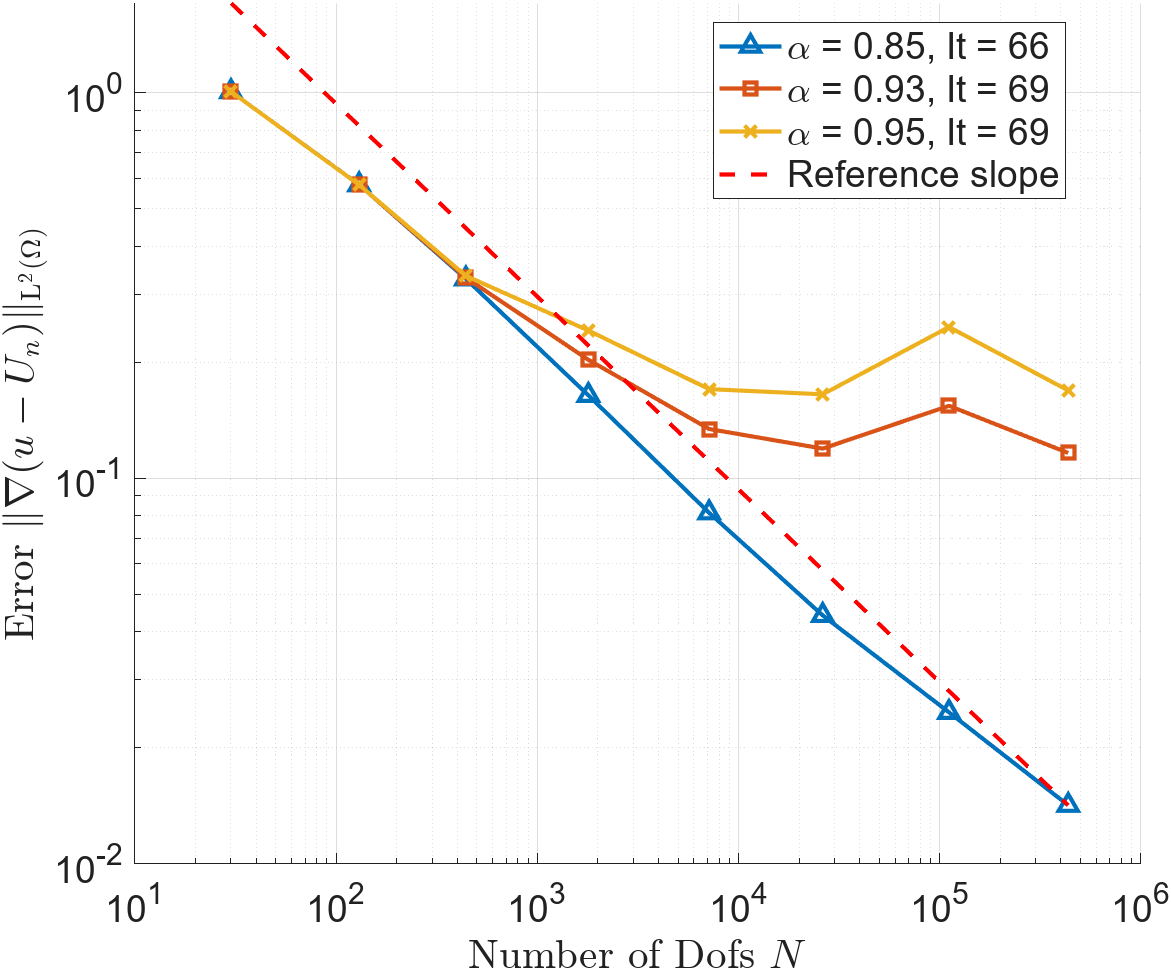}\hfill
         \includegraphics[scale=0.367]{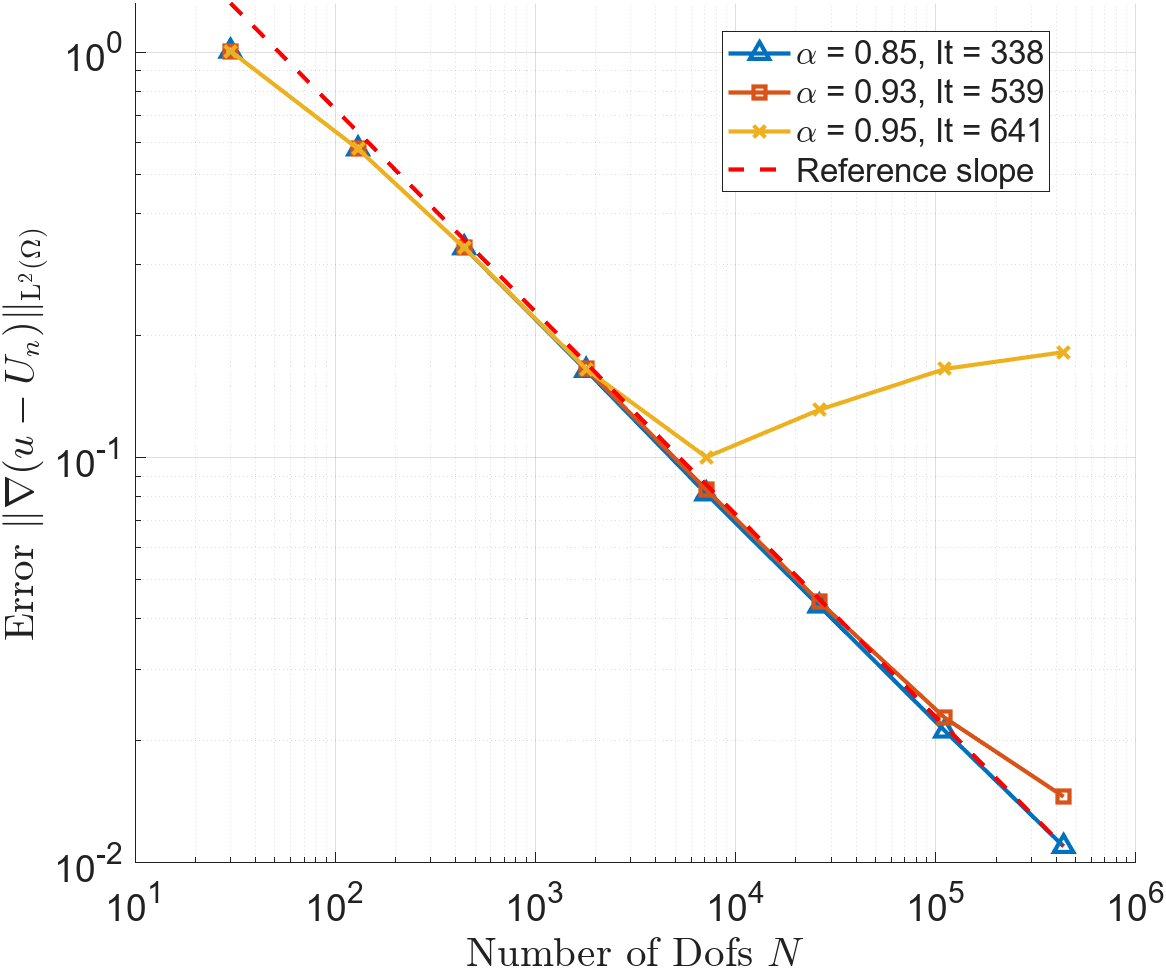}
         \caption{Experiment 3: Stages of divergence for various $\alpha$ values with $\gamma=1$ (left) and $\gamma=10$ (right).}
         \label{Fig:Pb3 div stages}
    \end{figure}

        In summary, our numerical study
        across the three experiments suggests that $\alpha \simeq 0.8$ is generally a good choice. 
        This observation is reinforced by Fig.~\ref{Fig:Pb3 div iterations}, which shows that $\alpha \simeq 0.8$ remains effective given a sufficient number of iterations is being performed. 
        This supports the application of the Picard iteration as an iterative nonlinear solver for the problem class under consideration. In particular, this method is stable and robust for a wide range of $\alpha$ values, requiring only a few iterations to reach optimal convergence rates, even for strong exponential nonlinearities.

\subsection{Additional discussion: Scheme stability}
    Finally, in Figs.~\ref{Fig:Pb3 no mixed}--\ref{Fig:Pb3 only mixed}, we explore how the strength of the nonlinearity $g(u)$ and the type of boundary conditions influence the convergence behavior of the Picard iteration.  We fix $\gamma=10$ in~\eqref{eq:gamman}, and consider the range $\alpha\in [0.6,0.99]$ for uniformly spaced samples of distance $\Delta\alpha=\nicefrac{1}{30}$ as before. 

     \begin{figure}
         \centering
         \includegraphics[scale=0.35]{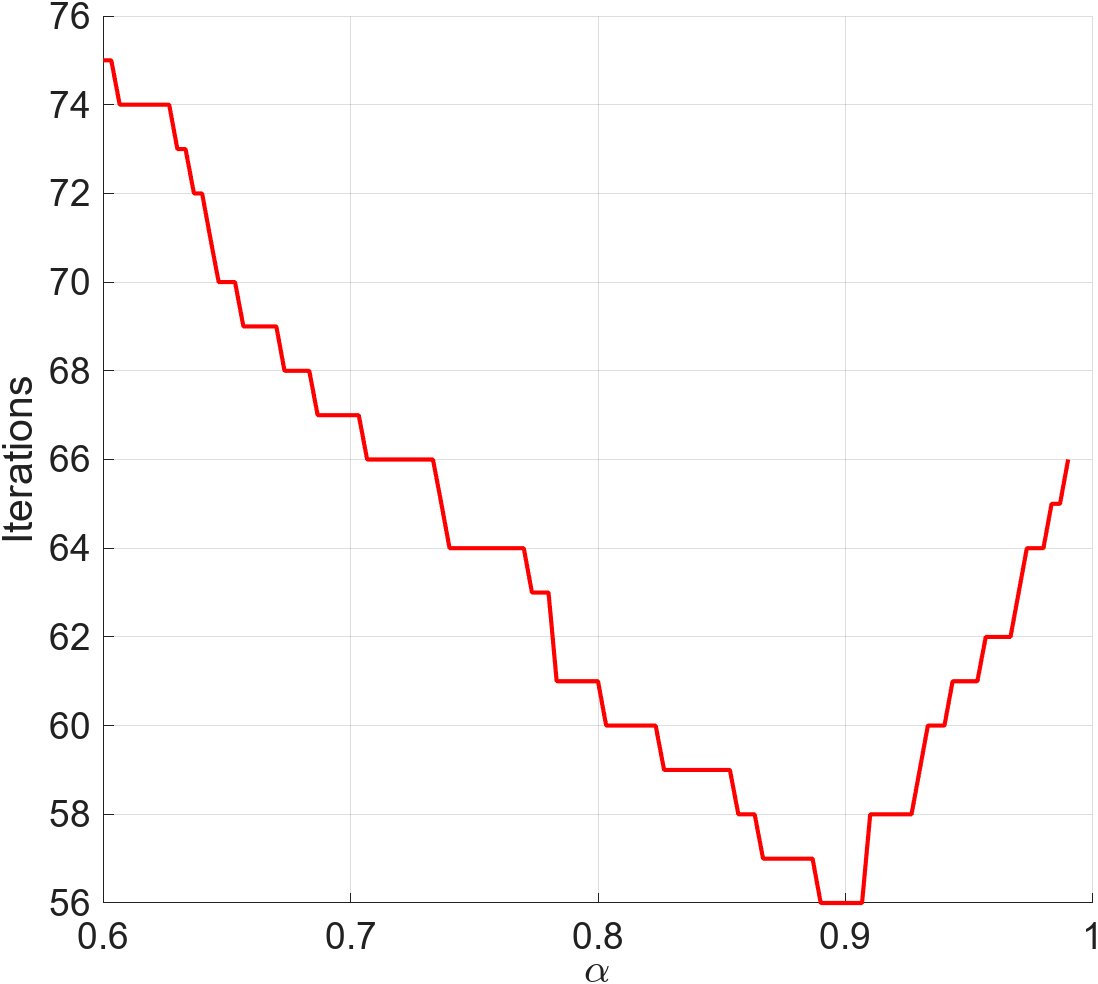}\hfill\includegraphics[scale=0.345]{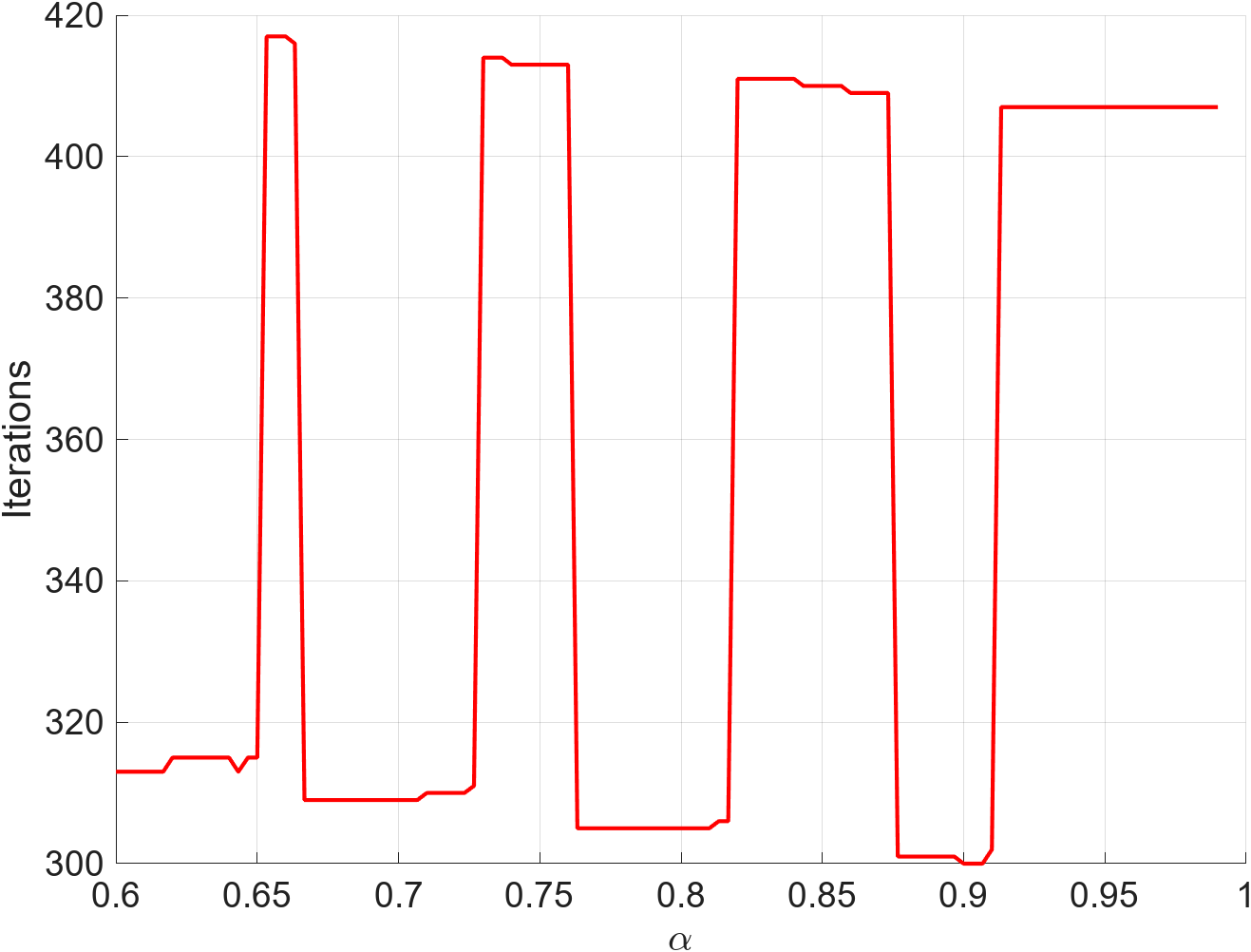}
         \caption{Numbers of iterations needed {across all meshes} in terms of $\alpha$ for $\gamma=10$. Left: Nonlinearity $g(u)=\exp(4|u|^{0.9}u)$ and \emph{Dirichlet boundary conditions}. Right: Nonlinearity $g(u)=\exp(|u|^{0.9}u)$ and \emph{mixed boundary conditions}.}
         \label{Fig:Pb3 no mixed}
    \end{figure}

    In Fig.~\ref{Fig:Pb3 no mixed} we compare {throughout the mesh sequence} homogeneous Dirichlet and mixed boundary data for nearly critical nonlinearities. We see that the type of boundary conditions significantly impacts the convergence regime of the Picard iteration. In the former case (left plot), in spite of a larger constant in the exponent of the nonlinearity, a parabolic shape that resembles the plots in Figs.~\ref{Fig:Pb1 optimization}, \ref{Fig:Pb2 optimization} and~\ref{Fig:Pb3 optimization} is obtained. However, compared to the previous experiments, the iteration numbers are considerably larger. This is even more pronounced in the right plot, where mixed boundary conditions are investigated; in addition, step-like plateaus appear, where the number of iterations are locally constant for increasing values of $\alpha$. Evidently, this oscillatory behavior is quite unstable, as slight variations in $\alpha$ can cause a notable increase or decrease in the iteration numbers.
    
    Finally, Fig.~\ref{Fig:Pb3 only mixed} focuses solely on mixed boundary conditions. A significant change in shape and behavior is observed for two different exponential nonlinearities. Another remarkable observation lies in the even more drastic increase in the number of iterations, similar to Fig.~\ref{Fig:Pb3 no mixed} (right), that is visible in both plots in Fig.~\ref{Fig:Pb3 only mixed}. 

    In conclusion, on a fairly speculative note, our experiments may indicate that the use of mixed boundary conditions plays a more important role with regard to the iteration numbers  needed to reach the OCR by means of the Picard scheme than the strength of the (subcritical) nonlinearity.

    \begin{figure}
         \centering
         \includegraphics[scale=0.34]{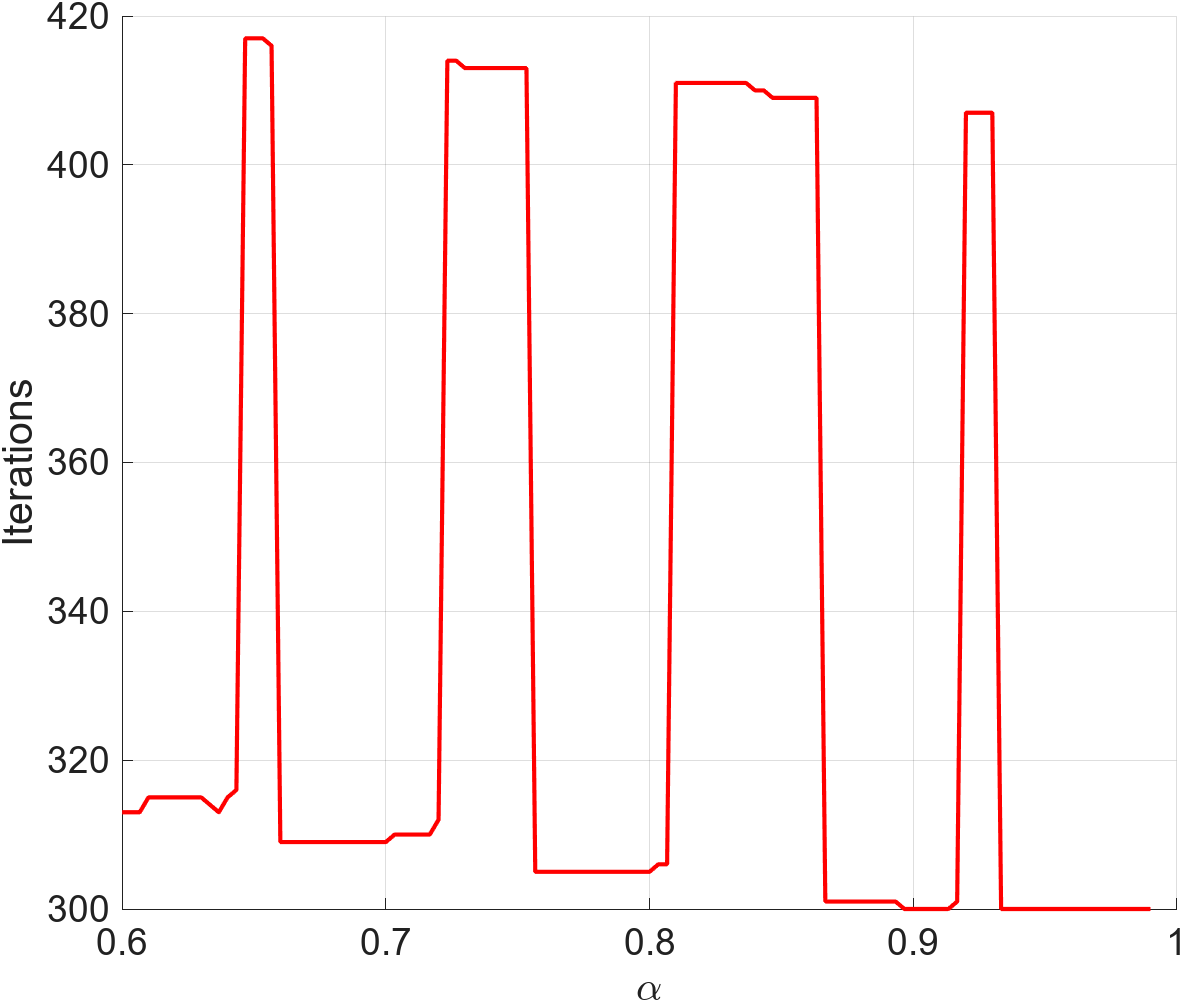}\hfill\includegraphics[scale=0.34]{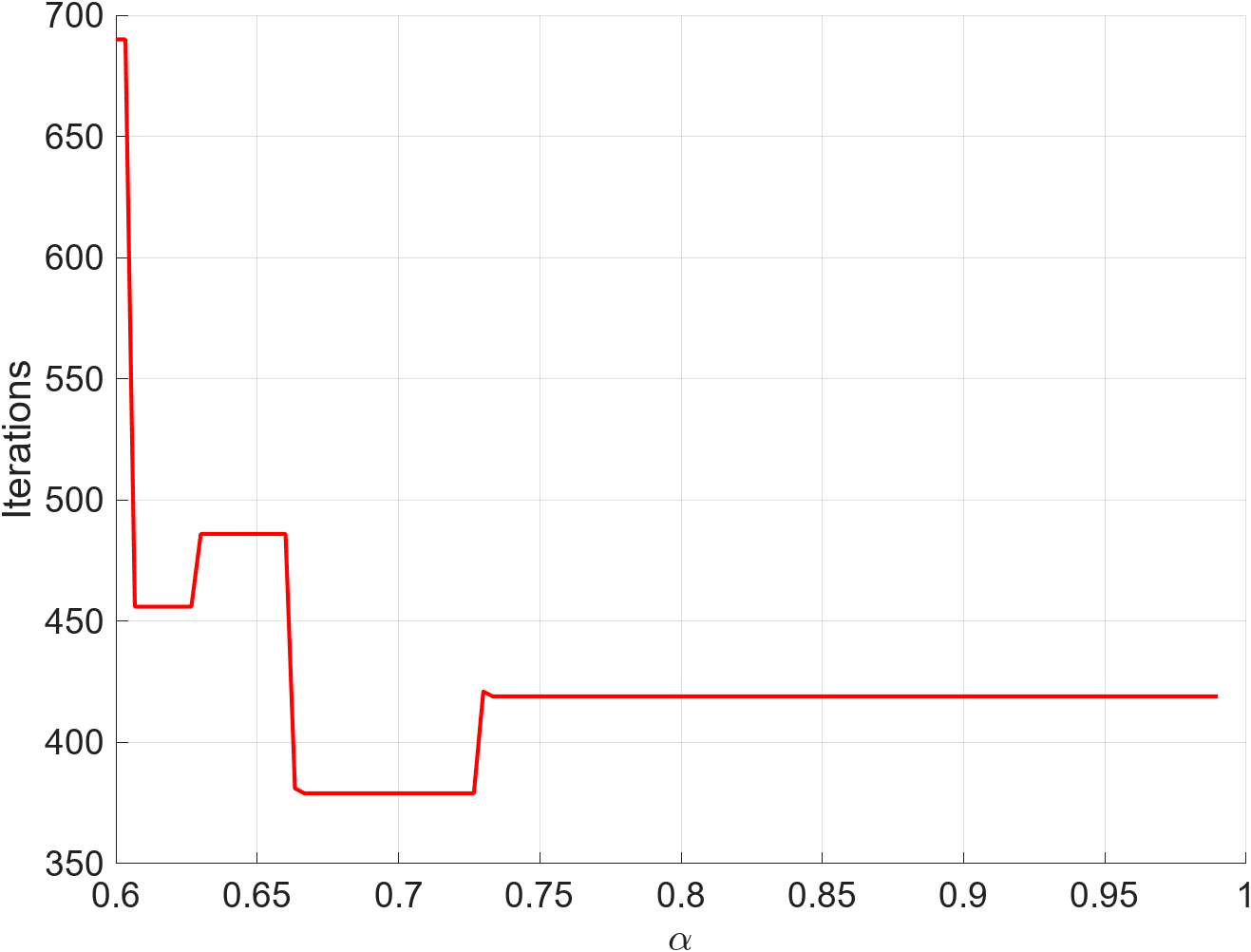}
         \caption{Numbers of iterations needed {}{across all meshes} in terms of $\alpha$ for $\gamma=10$, with mixed boundary conditions. Left: Nonlinearity $g(u)=\exp(u)$. Right: Nonlinearity $g(u)=\exp(4u)$.}
         \label{Fig:Pb3 only mixed}
    \end{figure}

\section{Conclusions} \label{sc:concl}
    In our work, we have developed the a priori error analysis of the iterative linearized Galerkin (ILG) framework for semilinear elliptic problems with mixed boundary conditions and exponentially or even subcritically growing nonlinear reactions. For such problems, we have demonstrated well-posedness under suitable monotonicity assumptions. This is accomplished through a new local analysis for a contractive Picard iteration, which, on the one hand, implies existence and uniqueness of a solution (on both the continuous and discrete level), and, on the other hand, motivates an iterative finite element scheme. A key ingredient is our modification of the Trudinger inequality that allows to remove the usual norm restriction, thereby enabling a search for weak solutions within the entire space $\spcH^1(\Omega)$; in the pure Dirichlet case, based on the Moser-Trudinger inequality, our analysis extends to certain nonlinearities of critical growth. In addition, we have derived a quasi-optimality bound that, in a subsequent step, has led to  optimal convergence for the iterative numerical approximation scheme on graded meshes. Furthermore, our numerical experiments validate the optimal convergence rate and thereby underscore the effectivity of the proposed approach in handling both challenging reaction nonlinearities and possible corner singularities. 
       
\bibliographystyle{amsalpha}
\bibliography{ref}

\newcommand{\etalchar}[1]{$^{#1}$}
\providecommand{\bysame}{\leavevmode\hbox to3em{\hrulefill}\thinspace}
\providecommand{\MR}{\relax\ifhmode\unskip\space\fi MR }
\providecommand{\MRhref}[2]{%
  \href{http://www.ams.org/mathscinet-getitem?mr=#1}{#2}
}
\providecommand{\href}[2]{#2}
\begin{thebibliography}{BDM{\etalchar{+}}17}

\bibitem[AF03]{adams2003sobolev}
R.A. Adams and J.J.F. Fournier, \emph{Sobolev spaces}, Pure and Applied Mathematics, Academic Press, 2003.

\bibitem[AW17]{AmreinWihler:17}
M.~Amrein and T.P. Wihler, \emph{An adaptive space-time {N}ewton-{G}alerkin approach for semilinear singularly perturbed parabolic evolution equations}, IMA J. Numer. Anal. \textbf{37} (2017), no.~4, 2004--2019.

\bibitem[BBI{\etalchar{+}}23]{BeckerBrunnerInnerbergerMelenkPraetorius:23}
R.~Becker, M.~Brunner, M.~Innerberger, J.M. Melenk, and D.~Praetorius, \emph{Cost-optimal adaptive iterative linearized {FEM} for semilinear elliptic {PDE}s}, ESAIM Math. Model. Numer. Anal. \textbf{57} (2023), no.~4, 2193--2225. \MR{4609880}

\bibitem[BDM{\etalchar{+}}17]{BDMR17}
C.~Bernardi, J.~Dakroub, G.~Mansour, F.~Rafei, and T.~Sayah, \emph{Convergence analysis of two numerical schemes applied to a nonlinear elliptic problem}, J. Sci. Comput. \textbf{71} (2017), no.~1, 329--347. \MR{3620224}

\bibitem[BDMS15]{BDMS15}
C.~Bernardi, J.~Dakroub, G.~Mansour, and T.~Sayah, \emph{A posteriori analysis of iterative algorithms for a nonlinear problem}, J. Sci. Comput. \textbf{65} (2015), no.~2, 672--697. \MR{3411283}

\bibitem[BG88]{Babuska1986}
I.~Babuška and B.Q. Guo, \emph{Regularity of the solution of elliptic problems with piecewise analytic data. {P}art {I}. {B}oundary value problems for linear elliptic equation of second order}, SIAM J. Math. Anal. \textbf{19} (1988), no.~1, 172--203.

\bibitem[BG89]{BabuskaGuo:1989}
I.~Babu\v{s}ka and B.Q. Guo, \emph{Regularity of the solution of elliptic problems with piecewise analytic data. {II}: The trace spaces and application to the boundary value problems with nonhomogeneous boundary conditions}, SIAM J. Math. Anal. \textbf{20} (1989), no.~4, 763--781.

\bibitem[BKP79]{Babuska1979}
I.~Babuška, R.~B. Kellogg, and J.~Pitkäranta, \emph{Direct and inverse error estimates for finite elements with mesh refinements}, Numer. Math. \textbf{33} (1979), 447--471.

\bibitem[BPS25]{BPS:25}
M.~Brunner, D.~Praetorius, and J.~Streitberger, \emph{Cost-optimal adaptive {FEM} with linearization and algebraic solver for semilinear elliptic pdes}, Numer. Math. \textbf{157} (2025), 409--445.

\bibitem[Cia13]{Ciarlet:2013}
P.G. Ciarlet, \emph{Linear and nonlinear functional analysis with applications}, SIAM, {P}hiladelphia, PA, 2013. \MR{3136903}

\bibitem[CW17]{CongreveWihler:2017}
S.~Congreve and T.P. Wihler, \emph{Iterative {G}alerkin discretizations for strongly monotone problems}, J. Comp. Appl. Math. \textbf{311} (2017), 457--472.

\bibitem[EAEV11]{El-AlaouiErnVohralik:11}
L.~El~Alaoui, A.~Ern, and M.~Vohral{\'\i}k, \emph{{Guaranteed and robust a posteriori error estimates and balancing discretization and linearization errors for monotone nonlinear problems}}, Comput. Methods Appl. Mech. Engrg. \textbf{200} (2011), no.~37-40, 2782--2795.

\bibitem[EV13]{ErnVohralik:13}
A.~Ern and M.~Vohral{\'\i}k, \emph{{Adaptive inexact {Newton} methods with a posteriori stopping criteria for nonlinear diffusion {PDEs}}}, SIAM J. Sci. Comput. \textbf{35} (2013), no.~4, A1761--A1791.

\bibitem[GHPS18]{ghps18}
G.~Gantner, A.~Haberl, D.~Praetorius, and B.~Stiftner, \emph{Rate optimal adaptive {FEM} with inexact solver for nonlinear operators}, IMA J. Numer. Anal. \textbf{38} (2018), no.~4, 1797--1831.

\bibitem[GHPS21]{ghps21}
G.~Gantner, A.~Haberl, D.~Praetorius, and S.~Schimanko, \emph{Rate optimality of adaptive finite element methods with respect to the overall computational costs}, Math. Comp. \textbf{90} (2021), 2011--2040.

\bibitem[HHSW25]{He2024}
Y.~He, P.~Houston, C.~Schwab, and T.P. Wihler, \emph{Exponential convergence of hp-{ILGFEM} for semilinear elliptic boundary value problems with monomial reaction}, IMA J. Numer. Anal. (2025).

\bibitem[HPW21]{HeidPraetoriusWihler:2020}
P.~Heid, D.~Praetorius, and T.P. Wihler, \emph{Energy contraction and optimal convergence of adaptive iterative linearized finite element methods}, Comput. Methods Appl. Math. \textbf{21} (2021), no.~2, 407--422. \MR{4235817}

\bibitem[HW18]{Heid2018}
P.~Heid and T.P. Wihler, \emph{Adaptive iterative linearization {G}alerkin methods for nonlinear problems}, Math. Comp. \textbf{89} (2018), no.~326, 2707--2734.

\bibitem[Kon67]{kondrat1967boundary}
V.A. Kondrat'ev, \emph{Boundary problems for elliptic equations in domains with conical or angular points}, Trudy Moskov. Mat. Obšč. \textbf{16} (1967), 209--292.

\bibitem[Mos71]{Moser1971}
J.~Moser, \emph{A sharp form of an inequality by {N. Trudinger}}, Indiana Univ. Math. J. \textbf{20} (1971), 1077--1092.

\bibitem[Sch98]{Schwab1998}
C.~Schwab, \emph{p- and hp- finite element methods: Theory and applications in solid and fluid mechanics}, Numer. Math. Sci. Comput., Clarendon Press, 10 1998.

\bibitem[Tru67]{Trudinger1967}
N.S. Trudinger, \emph{On imbeddings into {O}rlicz spaces and some applications}, Indiana Univ. Math. J. \textbf{17} (1967), 474--483.

\bibitem[Zar60]{Zarantonello:1960}
E.H. Zarantonello, \emph{Solving functional equations by contractive averaging}, Mathematics Research Center, University of Wisconsin, 1960.

\bibitem[ZB90]{zeidler1990nonlinear}
E.~Zeidler and L.F. Boron, \emph{Nonlinear functional analysis and its applications: {II}/{B}: Nonlinear monotone operators}, Nonlinear Functional Analysis and Its Applications: Nonlinear Monotone Operators, Springer, 1990.

\end{thebibliography}

\end{document}